\theoremstyle{definition}
\theoremstyle{remark}
\newtheorem{remark}[theorem]{Remark}
\DeclareMathOperator{\tr}{tr}
\newcommand{\x}{{\bm x}}
\newcommand{\dd}{\mathrm{d}}
\numberwithin{equation}{section}
\begin{document}
% \title{The Non-Isothermal Allen-Cahn Equations:\\
% Derivation, Asymptotic Limits and Simulations}
\title{Non-isothermal diffuse interface model for phase transition and interface evolution}

%DIFFUSIVE INTERFACE MODEL FOR INTERFACE EVOLUTIONS IN NON-ISOTHERMAL FLUIDS

%  Temperature effects with diffuse interface method: Allen-Cahn, curvature effects in Stefan problem
\author{Chun Liu\thanks{Department of Applied Mathematics, Illinois Institute of Technology, Chicago, IL. Email: cliu124@iit.edu} \and Jan-Eric Sulzbach\thanks{Department of Mathematics, Technical University Munich, Munich, Germany. Email: janeric.sulzbach@tum.de}\and  Yiwei Wang\thanks{Department of Mathematics, University of California, Riverside, CA.~ Email: yiweiw@ucr.edu}, corresponding author }
 
\date{}
\maketitle
\begin{abstract}
In this paper, we derive a thermodynamically consistent non-isothermal diffuse interface model for  phase transition and interface evolution involving heat transfer. 
This model is constructed by integrating concepts from classical irreversible thermodynamics with the energetic variational approach. By making specific assumptions about the kinematics of the temperature, we derive a non-isothermal Allen-Cahn equation.
Through both asymptotic analysis and numerical simulations, we demonstrate that in the sharp interface limit, the non-isothermal Allen-Cahn equation converges to a two-phase 
nonlinear Stefan type problem, under a certain scaling of the melting/freezing energy.
In this regime, the motion of the liquid-solid interface and the temperature interface coincide and are governed by the mean curvature, at least for a short time. The result provides a justification for the classical Stefan problem within a certain physical regime.
\end{abstract}

\section{Introduction}

Understanding the evolution of the interface between two phases undergoing a phase transition is an important problem in both physics and mathematics \cite{alexiades2018mathematical}. 
One classical approach to studying this behavior is through Stefan problems \cite{rubinshteuin1971stefan, stefan1891theorie, visintin2008introduction}.
Generally speaking, the formulation of a Stefan problem involves solving a heat equation within each respective phase, while adhering to prescribed initial and boundary conditions. At the interface, where the two phases meet, the temperature is fixed at the critical temperature for phase transition, which is a key feature of classical Stefan problems. To complete the mathematical framework, an additional equation known as the Stefan condition is imposed. This condition, rooted in energy conservation principles, serves to precisely determine the position of the advancing interface.
Stefan problems have been one of the most well-studied classes of free boundary problems dating back over a hundred years with the pioneering work by  Lame \cite{lame1831memoire} and Stefan \cite{stefan1891theorie} and more recent results by \cite{rubinshteuin1971stefan,pruss2007existence,bernauer2011optimal,mamode2013two,pegler2021convective}.
Key results such as the existence of solutions , the continuity of the moving boundary and the regularity of solutions can be found in a series of papers \cite{caffarelli1977regularity,kinderlehrer1978smoothness,caffarelli1979continuity,caffarelli1983continuity,athanasopoulos1996regularity, escher1996analyticity}.
However, it remains unclear whether such a simple model is truly valid in reality. For instance, classical Stefan problems typically consider only a single interface by fixing the temperature at the phase change temperature at the liquid-solid interface, which is not true in general. 
Whereas in reality, it is often observed that the liquid phase is below its freezing point or the solid phase is above it, which is known as supercooling and  superheating \cite{caginalp1986analysis, sherman1970general,dewynne1990survey,baker2022zero,cuchiero2023propagation}.
More importantly, one could ask what is the thermodynamic reasoning behind Stefan problems? Are these models thermodynamically consistent, i.e., do they satisfy the first and second laws of thermodynamics?

In this paper, we revisit this classical problem  by developing a new thermodynamically consistent non-isothermal diffuse interface model for  phase transition and interface evolution involving heat transfer.
In contrast to the approaches in Stefan problems, which assume that there exists 
a sharp interface between the liquid and solid phase, diffuse interface models \cite{allen1979microscopic} assume that there exists an interfacial region between the two phases, which is closer to the physical reality \cite{caginalp1986analysis}. One can introduce a  smooth space-time-dependent phase function $\phi(\x, t)$ to identify each phase, as well as the interface.
Classical diffuse interface models often consider isothermal cases \cite{allen1979microscopic, cahn1958free}.
However, to model phase transitions and heat transfer as considered in the Stefan problem \cite{Landau2013collected}, it is crucial to include the temperature as an additional variable, which is challenging because one requires the system to be both physically and mathematically consistent.
Some advances in this direction have been made by Caginalp in a series of papers \cite{caginalp1986analysis,caginalp1990dynamics,caginalp1995derivation,caginalp1998convergence}, who was one of the first to study temperature dependent phase field models and who found the relation of these systems with free boundary problems, in particular the Stefan problem, as their sharp interface limit \cite{caginalp1998convergence}. Around the same time, Penrose and Fife introduced their famous model, where both the phase parameter and the temperature can depend on space and time \cite{penrose1990thermodynamically}. The connection between the Penrose-Fife model and Stefan-type problems has been explored in the literature \cite{colli1997stefan, colli1995penrose}.
However, in both models, the temperature equation only contains linear terms in both the phase variable and temperature variable.
In this work we take a new approach by combining ideas from the energetic variational approach with non-equilibrium thermodynamics. This approach works with the entropy of the system rather than the enthalpy as main thermodynamic quantity as most of the previous work, which allows us to include higher order nonlinearities of the system that could get lost in other cases.
Furthermore, our approach allows for various assumptions regarding the kinematics of the temperature, corresponding to different heat transfer mechanisms in physical bodies. This flexibility enables applications across a wide range of physical models.

By assuming the  temperature is on a fixed background and not transported with material particles, 
we derive a model that generalizes the classical Allen-Cahn equation to a non-isothermal setting. We then investigate the sharp interface limit of this non-isothermal Allen-Cahn equation, which validates the classical Stefan problem within a specific physical regime.
Through asymptotic analysis, we show that under certain scaling of the melting/freezing energy, the motion of the liquid-solid interface and the temperature interface coincide and are governed by its mean curvature. We also conduct numerical simulations of the diffuse interface model, which demonstrate that the motion of both interfaces follows the mean curvature flow, at least for a short time. The simulation results further support the asymptotic analysis and indicate that the Stefan model is a good reduced model within this regime.

The rest of paper is organized as follows: In Section \ref{sect: Derivation}, we derive, following a free energy approach, non-isothermal diffuse interface models, including the non-isothermal Allen-Cahn equation, under various assumptions on the kinematics of the temperature. 
In Section \ref{sect: Limit}, we show that the formal asymptotic limit of the non-isothermal Allen-Cahn system, under certain scaling of the melting/freezing energy, leads to a nonlinear two-phase Stefan type problem.
Hence, the Stefan problem can be seen as a reduced model of the non-isothermal Allen-Cahn equation.  Finally, we perform some numerical studies to the non-isothermal Allen-Cahn model 
in Section \ref{sect: Numerics}.

\section{Derivation of non-isothermal diffuse interface models}\label{sect: Derivation}

In this section, we derive several non-isothermal diffuse interface models, including the non-isothermal Allen-Cahn equation, for phase transition between a liquid and solid phase. These models are developed by combining ideas from the classical Energetic Variational Approach (EnVarA) for isothermal system \cite{giga2017variational,wang2022some} and non-equilibrium thermodynamics \cite{Fremond, coleman1967thermodynamics}. 

We first present a practical user manual designed for developing a wide range of non-isothermal physical models:
\begin{itemize}
    \item Formulate the free energy density $\psi(f, \theta)$ of the system, which includes both mechanical (resolved) and thermal (unresolved) contributions. 
    Here $f$ denotes the mechanical state variable, such as density and phase function, and $\theta$ represents the temperature.
    From this free energy, one can derive other thermodynamic quantities, such as entropy $s(f, \theta)$ and internal energy $e(f, \theta)$, according to the fundamental thermodynamic relations.

    \item Specify the kinematics of the state variable $f$ and the temperature $\theta$, i.e, describes how $f$ and $\theta$ change when the material particles move, without microscopic evolution or other thermal processes.

    \item Derive the conservative and dissipative forces by using the Energetic Variational Approach (with prescribed mechanical dissipation) and combine them using a force balance condition, which leads to the equation of $f$.
    \item Determine the equation of the temperature $\theta$ or entropy $s(f, \theta)$ by using the laws of thermodynamics and constitutive relations, including the Clausius-Duhem relation and Fourier's law.
\end{itemize}

This machinery has been successfully used to model other temperature-dependent fluid systems such as, the non-isothermal liquid crystal flow \cite{de2019non},  the Brinkmann-Fourier system \cite{Liu2022}, a chemical reaction diffusion system \cite{liu2022well} and the non-isothermal Cahn-Hilliard model \cite{de2024temperature}. 
We will explain each step in the following, focusing on building non-isothermal diffuse interface models for phase transitions.

\subsection{Free energy formalism of diffuse interface models}\label{sec: free energy}

To model phase transitions, we start with a modified Ginzburg-Landau free energy density $\psi(\phi,\theta)$, which includes additional terms to account for freezing/melting and thermal energy contributions, given by
\begin{align}\label{eq: free energy Allen Cahn}
\psi(\phi,\theta)= \frac{\epsilon}{2}|\nabla \phi|^2+
\frac{1}{\epsilon} W(\phi) -\chi(\epsilon)\frac{\lambda}{\theta_c}(\theta-\theta_c)g(\phi) + c_v\theta(1-\log\theta).
\end{align}
Here, $\phi$ is the phase function, $\theta$ is the temperature, $\theta_c$ is the critical temperature at which the system undergoes the phase transition, $\epsilon$ is the small parameter related to the thickness of the interface, and $W(\phi)$ is the interfacial potential energy.
The parameter $c_v$ corresponds to the volumetric heat capacity, while $\chi(\epsilon) \lambda$ represents the parameters related to the latent heat of the system.
We use $\chi(\epsilon)$ to denote the scaling of the latent heat with respect to the interfacial parameter $\epsilon$, which plays an important role in the following analysis.
% ({\color{red}What is $\chi(\epsilon)$})
We refer to \cite{mcfadden1993phase,ciarletta2011phase} for details on the physical context of these parameters.
It is worth mentioning that $\phi$ is a dimensionless variable that labels the two phases, and the free energy (\ref{eq: free energy Allen Cahn}) is written in a non-dimensionalized form.

In classical diffuse interface models,  $\phi = \pm 1$ is often used to identify the solid and liquid phases. Hence, common choices for the interfacial potential energy are the logarithmic potential
\begin{align*}
    W(\phi)_{log}=(1 + \phi) \log(1 + \phi) + (1 - \phi) \log(1 - \phi) -  \phi^2,\quad \phi\in [-1,1],
\end{align*}
and the double-well potential
\begin{align*}
    W(\phi)_{dw}=\frac{1}{4}(\phi^2-1)^2,\quad \phi\in \mathbb{R}\ .
\end{align*}
The double-well potential can be seen as an approximation of the logarithmic potential.
Both potentials have two global minima in the interior of the physically relevant domain $[-1, 1]$, which is coherent with the physical interpretation of a diffuse interface model in which only the values of the variable $\phi \in [-1, 1]$ are meaningful.
We will work with the double-well potential throughout this paper.

\begin{remark}
We observe that for $\theta=\theta_c$, minimizing the free energy functional $\psi(\phi)$ corresponds to minimizing its two components, $W(\phi)$ and $|\nabla \phi|^2$. Therefore, candidates for the minimizers of \eqref{eq: free energy Allen Cahn} tend to take values at one of the two minima of the potential $W(\phi)$ while also having few oscillations between these two states to keep the gradient term small.
These regions of minimal energy are separated by a small interface, whose thickness is proportional to the parameter $\epsilon$. We will see in the following sections how these ideas transfer to the non-isothermal setting.
\end{remark}

Taking a closer look at the two additional terms in the free energy, when compared to the isothermal case, we observe that the third term represents the melting/freezing contribution to the free energy. 
The idea is that for temperatures $\theta > \theta_c$ we assume that the joint potential of $W(\phi)- \chi(\epsilon)\frac{\lambda}{\theta_c}(\theta - \theta_c)g(\phi) $ is no longer symmetric and that the local minimum for $\phi > 0$ is smaller than the local minima for $\phi < 0$. Thus, the global minimizer of the free energy is attained only for the state $\phi>0$.  
Similarly, we require a reverse statement to be true for $\theta<\theta_c$. Then, the temperature moves the free energy in favor of one phase state.
Two choices of $g(\phi)$ that satisfy the above are
\begin{align*}
    g(\phi)=\phi, \quad \textnormal{or } ~g(\phi)=-\frac{1}{3}\phi^3+\phi.
\end{align*}
The former is used in e.g. \cite{caginalp1986analysis}, whereas the advantage of the latter one is that the local minima of the joint potential are still attained at $\phi=\pm 1$.

\begin{remark}
The coefficient $\chi(\epsilon)$ allows for a different scaling of the melting/solidification term.
Common choices are $\chi(\epsilon)=1$. 
Other choices are $\chi(\epsilon)=\epsilon$ and $\chi(\epsilon)=\epsilon^{-1}$.
The effect of the later one is that now both the potential $W(\phi)$ and the melting/freezing term have the same scaling and thus can change the interface condition.
%For more details on the choice of $\chi(\epsilon)$ and the interplay with the nonlinearity $g(\phi)$ we refer to \cite{mcfadden1993phase,echebarria2004quantitative,wang2021phase}.
\end{remark}

The last contribution depends uniquely on the (absolute) temperature. This term is common in thermodynamics and is typical of the dynamics of the ideal gas, as we assume the free energy is concave with respect to the temperature variable.
This comes from the physical requirement that the heat capacity $c_v$ defined by $c_v = -\theta\partial_{\theta,\theta}\psi(\rho,\theta)$
is strictly positive.

\begin{remark}
    In the current study, we start with a simple form of the free energy to model phase transitions. Additional terms in the free energy and further assumptions such as a temperature dependent phase parameter $\epsilon=\epsilon(\theta)$ are also possible and have been studied for the case of the Cahn-Hilliard equation in \cite{de2024temperature}.
\end{remark}

After determining the form of the free energy density, one can define the entropy density $s(\phi, \theta)$ by
\begin{align}\label{eq: entropy Allen Cahn} 
s(\phi,\theta)= -\frac{\partial \psi}{\partial \theta}= \chi(\epsilon)\frac{\lambda}{\theta_c}g(\phi)+ c_v \log \theta.
\end{align}
From a thermodynamics point of view, the difference in the entropy between two states is responsible for the irreversibility of thermodynamic processes.
Moreover, we note that depending on $\chi(\epsilon)$ a separation of scales can also persist in the entropy.
In addition, we observe that for a fixed temperature $\theta$ the minimal entropy is obtained at the global minima of $g(\phi)$ and thus should be the preferred long-time state of the system with a slow evolution of the entropy. \\

Another quantity that we can define is the chemical potential $\mu=\mu(\phi,\theta)$, which is given by
\begin{align}\label{eq: chemical potential}
    \mu=\nabla\cdot\frac{\partial \psi(\phi,\theta)}{\partial\nabla\phi} -\frac{\partial\psi(\phi,\theta)}{\partial\phi}.
\end{align}
For the free energy (\ref{eq: free energy Allen Cahn}), we have
\begin{align}\label{eq:chem potential Allen Cahn}
    \mu(\phi,\theta)= \epsilon\Delta \phi-\frac{1}{\epsilon} W'(\phi)-\chi(\epsilon)\frac{\lambda}{\theta_c}(\theta-\theta_c)g'(\phi).
\end{align}

\subsection{Kinematics of the phase function and the temperature }\label{sec: kinematics of temp}
Now, we introduce the kinematics of the phase function $\phi$ and the temperature $\theta$. We assume that the media occupies a domain $\Omega \subseteq \mathbb{R}^d$, for $d=2,3$, with or without a boundary $\partial \Omega$.

In the setting of modeling phase transitions, $\phi$ is a non-conserved quantity, and we can impose the kinematics of the phase function $\phi$ as
\begin{align}\label{eq: kinematics}\begin{split}
\partial_t\phi +u \cdot \nabla \phi&=0,~~ (x,t)\in \Omega\times(0,T),\\
\phi_{|t=0}&=\phi_0,\\
 u\cdot n&=0, ~~(x,t)\in \partial \Omega\times(0,T),
\end{split}
\end{align}
supplemented by initial data and a no-flux boundary condition. Here, $u:\Omega \times (0,t)\to \mathbb{R}^d$ stands for the velocity of material points (or particles). Equation \eqref{eq: kinematics} is known as the scalar transport equation, which means that the information carried by the particle does not change when the particles move  \cite{liu2020variational}. More precisely, by assuming that $u$ has sufficient regularity, we can define the flow map $x(X, t)$ through
\begin{equation}\label{flow-map}\begin{split}
        \partial_t x(X,t)& = u(x(X,t),t),\quad (X,t) \in \Omega \times (0,T),\\
        x(X,0) &= X,\quad X\in \Omega,
        \end{split}
\end{equation}
where $X \in \Omega$ are the initial positions of the particles, known as the Lagrangian coordinates, and $x(X, t) \in \Omega$ are the Eulerian coordinates. 
 %throughout the paper we denote by $x\in \Omega$ the Eulerian coordinates and by $X\in \Omega$ the Lagrangian coordinates. 
% the transport equation for $\phi$ generates a smooth flow map
Then, the exact solution of \eqref{eq: kinematics} for the phase function $\phi$ is given by $\phi(t,x(X,t)) = \phi_0(X)$.

Similar to the mechanical variable, we shall ask 
how the temperature $\theta$ is transported without any thermal-mechanical processes. We define $\frac{\tilde{D}}{\tilde{D} t}$ as the mechanical time derivative, which describes the mechanical transport properties, known as the kinematics of temperature. We consider three possible situations, which lead to three distinct systems:
\begin{itemize}
    \item[(A1)] We assume that the temperature moves along with the material particles, i.e. $\theta$ is transported along the trajectory of the flow map with velocity $u$, given by
    
    $$\frac{\tilde{D}}{\tilde{D} t} \theta = \frac{\dd}{\dd t} \theta(x(X,t),t)  =  \partial_t \theta + u\cdot \nabla \theta. $$

    \item[(A2)] We assume that the temperature is on a fixed background, i.e. $\theta$ is independent of the flow map with velocity $u$. This relation is equivalent to the temperature satisfying the equation: $$\frac{\tilde{D}}{\tilde{D} t} \theta = \partial_t \theta.$$

    \item[(A3)] We assume that the temperature moves independent from the particles on the macroscopic scale, 
    but is transported along the trajectory of another flow map, the macroscopic flow map $y(X,t)$, defined by a macroscopic background velocity $v$.  For example, $v$ could represent the velocity in an incompressible Navier-Stokes equation. In this case,
     $$\frac{\tilde{D}}{\tilde{D} t} \theta = \frac{\dd}{\dd t} \theta(y(X,t),t)  =  \partial_t \theta + v \cdot \nabla \theta.$$
\end{itemize}

\subsection{EnVarA and force balance}

The key idea of the EnVarA is to derive the conservative and dissipative forces from a prescribed free energy and a mechanical dissipation functional using the least action principle and the maximum dissipation principle.

We start with the least action principle to derive the first contributions to the force balance equation. The least action principle states that the path taken by a Hamiltonian system between two states is the one for which the action functional $A$ is stationary (usually a minimum). This principle can be used to derive the the inertial and conservative forces in a dissipative system. More precisely, for a given initial volume $\Omega_0 \subseteq \Omega$ (the reference configuration), its evolution at a specific time $t \in (0, T)$ is represented by $\Omega_t^x = \{ z \in \Omega : z = x(X, t),~X \in \Omega_0 \}$, where $x(X, t)$ is the flow map introduced in (\ref{flow-map}), and $\Omega_t^x$ is referred to as the current configuration.
We can define the action $A$ as the integral of 
the kinetic energy minus the free energy over time $[0, T]$ as follows
\begin{align*}
A(x(\cdot))= \int_0^T \left( 
 E^{kin} - \int_{\Omega_t^x}\psi(\phi,\nabla\phi,\theta) \, \textnormal{d} x \right) \textnormal{d}t \ ,
\end{align*}
where $E^{kin}$ is the kinetic energy.
One shall consider the action $A$ as a functional of the flow map $x(X, t)$ in Lagrangian coordinates, as will be done later in  \eqref{action-in-lagrangian}.
The inertial and conservative forces, $f_{inertial}$ and $f_{cons}$, can be obtained by the variation of $A$ with respect to the flow map $x(X, t)$, i.e.,
\begin{equation}\label{eq:action-fcons}
\delta A(x(\cdot))=\frac{d}{d\varepsilon}\bigg|_{\varepsilon=0} A(x(\cdot) + \varepsilon \delta x(\cdot)) = 
 \int_0^T\int_{\Omega_t^x} (f_{inertial} - f_{cons}) \cdot \delta x \, \textnormal{d} x \textnormal{d}t,
\end{equation}
where $\delta x$ is any suitable smooth perturbation of the flow map $x(X, t)$.
%satisfies $\delta x(X, 0)$ and $\delta x(X, T) = 0$ and vanish. 
 
%The first result now, provides an explicit form of $f_{cons}$, 
Depending on the different kinematics (A1), (A2) or (A3) for the temperature $\theta$, the least action principle leads to different forces.
\begin{theorem}\label{thm:conservative-forces}
    Assume that the velocity $u$ in \eqref{eq: kinematics} generates an unique smooth flow map $x(X,t)$.
    \begin{enumerate}[label=(\roman*)]
        \item If the temperature $\theta$ satisfies the assumption (A1) and $E^{kin} = 0$, then
        \begin{equation*}
           f_{inertial}  = 0, \quad f_{cons} =  \mu \nabla \phi  + s  \nabla \theta.
        \end{equation*}
        \item If the temperature $\theta$ satisfies the assumption (A2) and $E^{kin} = 0$, then
        \begin{equation*}
           f_{inertial} = 0, \quad f_{cons} = \mu \nabla \phi.
        \end{equation*}
        \item  If the temperature $\theta$ satisfies the assumption (A3) and $E^{kin} = \int \frac{1}{2} |v|^2 \dd x$, where $v$ satisfies the incompressible condition, $\nabla \cdot v = 0$, then
            \begin{equation*}
            \begin{aligned}
          & f_{inertial}^u = 0, \quad f_{cons}^u=  \mu \nabla \phi, \\
          & f_{inertial}^v = - (\partial_t v + v \cdot \nabla v) \quad  f_{cons}^v= \nabla p +s\nabla \theta \\  %\nabla(\frac{1}{2} |v|^2-\psi(\theta))=: \nabla p. \\
          \end{aligned}
            \end{equation*}
    \end{enumerate}
\end{theorem}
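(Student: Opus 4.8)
The plan is to compute the first variation of the action $A$ in Lagrangian coordinates, where the transport equation \eqref{eq: kinematics} freezes the phase function to its initial datum, and then to push the result forward to Eulerian coordinates so as to read off $f_{inertial}$ and $f_{cons}$ through \eqref{eq:action-fcons}. Writing $F=\nabla_X x(X,t)$ for the deformation gradient of the flow map \eqref{flow-map} and $J=\det F$, the relation $\phi(x(X,t),t)=\phi_0(X)$ gives $\nabla_x\phi=F^{-T}\nabla_X\phi_0$, so that
\begin{equation*}
\int_0^T\!\int_{\Omega_t^x}\psi(\phi,\nabla\phi,\theta)\,\dd x\,\dd t
=\int_0^T\!\int_{\Omega_0}\psi\bigl(\phi_0,\,F^{-T}\nabla_X\phi_0,\,\Theta\bigr)\,J\,\dd X\,\dd t .
\end{equation*}
Here the Lagrangian temperature $\Theta(X,t)$ is what distinguishes the three cases: under (A1) it equals $\theta_0(X)$ and is unaffected by a perturbation of the flow map; under (A2), and under (A3) for the $u$-perturbation, it equals $\theta(x(X,t),t)$ with $\theta$ a field that does not depend on the $u$-flow map, so that $\delta\Theta=\nabla_x\theta\cdot\delta x$.

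For the core computation I would use the standard first-variation identities for a flow-map perturbation, expressed through its Eulerian representative $\widetilde{\delta x}(x,t)=\delta x(X,t)$: namely $\delta F=(\nabla_x\widetilde{\delta x})F$, $\delta J=J\,\nabla_x\!\cdot\!\widetilde{\delta x}$, and $\delta\bigl(F^{-T}\nabla_X\phi_0\bigr)=-(\nabla_x\widetilde{\delta x})^{T}\nabla_x\phi$. Substituting these, changing variables back to $\Omega_t^x$, integrating by parts in $x$ (the boundary terms vanish since admissible perturbations obey $\widetilde{\delta x}\cdot n=0$ on $\partial\Omega$ and $\phi$ satisfies the Neumann condition $\nabla\phi\cdot n=0$), and collecting the coefficient of $\widetilde{\delta x}$ gives, in case (A1),
\begin{equation*}
\delta\!\int_0^T\!\int_{\Omega_t^x}\psi\,\dd x\,\dd t
=\int_0^T\!\int_{\Omega_t^x}\bigl(\mu\nabla\phi+s\nabla\theta\bigr)\cdot\widetilde{\delta x}\,\dd x\,\dd t ,
\end{equation*}
with $\mu$ as in \eqref{eq: chemical potential} and $s=-\partial_\theta\psi$ as in \eqref{eq: entropy Allen Cahn}; the term $s\nabla\theta$ originates from the $\psi\,\nabla_x\!\cdot\!\widetilde{\delta x}$ contribution, whose integration by parts produces $-\nabla_x\psi$, with temperature part $-\partial_\theta\psi\,\nabla\theta=s\nabla\theta$. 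In cases (A2) and (A3) there is, in addition, the term $\partial_\theta\psi\,\delta\Theta=-s\,\nabla_x\theta\cdot\widetilde{\delta x}$, which cancels it exactly and leaves only $\mu\nabla\phi$. Matching against \eqref{eq:action-fcons} with $E^{kin}=0$, so that $\delta A=-\delta\!\int_0^T\!\int_{\Omega_t^x}\psi\,\dd x\,\dd t$, then yields $f_{inertial}=0$ with $f_{cons}=\mu\nabla\phi+s\nabla\theta$ for (i) and $f_{cons}=\mu\nabla\phi$ for (ii).

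For (iii), the $u$-variation is exactly as in (ii), giving $f_{inertial}^u=0$ and $f_{cons}^u=\mu\nabla\phi$. For the $v$-part, $E^{kin}=\int\tfrac{1}{2}|v|^2\,\dd x$ is rewritten over the macroscopic flow map $y(X,t)$ (with $\partial_t y=v(y,t)$ and $\det\nabla_X y\equiv 1$ by incompressibility), so that $\int_0^T E^{kin}\,\dd t=\int_0^T\!\int_{\Omega_0}\tfrac{1}{2}|\partial_t y|^2\,\dd X\,\dd t$; varying $y\mapsto y+\varepsilon\,\delta y$, integrating by parts in time, and using $\partial_{tt}y=(\partial_t v+v\cdot\nabla v)(y,t)$ gives $f_{inertial}^v=-(\partial_t v+v\cdot\nabla v)$, while enforcing the constraint $\det\nabla_X y\equiv 1$ with a Lagrange multiplier $p$ (using $\delta\det\nabla_X y=\nabla_x\!\cdot\!\widetilde{\delta y}$ and one integration by parts) contributes $f_{cons}^v=\nabla p$.

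I expect the main obstacle to be the careful bookkeeping of which fields are held fixed and which vary under the flow-map perturbation---above all the temperature---since the entire difference between (i) and (ii)/(iii) rests on the fact that in (A2)/(A3) the extra term $\partial_\theta\psi\,\delta\Theta$ is present and precisely annihilates the $s\nabla\theta$ produced by $\nabla_x\psi$, whereas in (A1) nothing cancels it. A secondary technical point is to verify that every boundary term arising from the spatial integrations by parts vanishes, which follows from $\widetilde{\delta x}\cdot n=0$ on $\partial\Omega$ together with $\nabla\phi\cdot n=0$ (note $(\partial\psi/\partial\nabla\phi)\cdot n=\epsilon\,\nabla\phi\cdot n=0$).
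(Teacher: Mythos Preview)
Your proposal is correct and follows essentially the same route as the paper: pull the action back to Lagrangian coordinates, vary the flow map using the standard identities for $\delta F$, $\delta J$, and $\delta(F^{-T}\nabla_X\phi_0)$, push forward, integrate by parts, and read off $\mu\nabla\phi+s\nabla\theta$ (resp.\ $\mu\nabla\phi$) from $\nabla\cdot(\partial_{\nabla\phi}\psi\otimes\nabla\phi)-\nabla\psi$. Your explanation of the (A1)/(A2) distinction---that in (A2) the extra contribution $\partial_\theta\psi\,\delta\Theta=-s\,\nabla\theta\cdot\widetilde{\delta x}$ cancels the $s\nabla\theta$ term---is in fact more explicit than the paper's one-line remark that ``an additional term $\partial_\theta\psi\,\nabla\theta$ appears,'' and your treatment of (iii) via the $y$-flow map and the incompressibility multiplier matches the paper exactly; the only cosmetic difference is that the paper kills the boundary terms by taking $\delta x$ compactly supported in $\mathring{\Omega}_0\times(0,T)$ rather than invoking $\widetilde{\delta x}\cdot n=0$ and $\nabla\phi\cdot n=0$.
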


\begin{proof}
In this proof, we focus on case $(i)$ and only present the main differences for cases $(ii)$ and $(iii)$.

Let us assume that $(A1)$ holds, where we have that $\theta(x(X,t),t) = \theta_0(X)$, with $\theta_0$ being the initial temperature. Then, in order to compute the variation of the action, we first recast the action functional in Lagrangian coordinates. This yields
\begin{equation}\label{action-in-lagrangian}
A(x(\cdot))=-\int_{t\in [0,T]}\int_{X\in \Omega_0} 
    \psi
    \big(
    \underbrace{
       \phi_0(X),F(X,t)^{-T}\nabla_X \phi_0(X),\theta_0(X)
    }_{=:\mathcal{G}(X,t)}
    \big)\det F(X,t) \,\textnormal{d}X \textnormal{d}t,
\end{equation}
where $F(X,t) = \nabla_X x(t,X) = (\partial x_i/\partial X_j)_{i,j=1,\dots,d}$ denotes the deformation tensor.
For this functional, we consider a variation of the form
\begin{align*}
    x^\varepsilon(X,t):=x(X,t)+\epsilon \delta x(X,t)
\end{align*}
 of the trajectory $x(X,t)$, where $\delta x(X,t)$ is an arbitrary smooth vector with compact support on $\mathring{\Omega}_0 \times (0,T)$.
Thus,
\begin{align*}
\delta  A(x(\cdot ))
:&=
    \frac{d}{d\epsilon}\bigg|_{\epsilon=0}A(x^\epsilon(\cdot ))\\
   &= 
    \int_{t\in [0,T]}
    \int_{X\in \Omega_0}
    \partial_{\nabla \phi}\psi \big(\mathcal{G}(X,t)\big)
    \cdot 
        F^{-T}\nabla_X \delta x F^{-T} \nabla_X \phi_0
       \det F \,\textnormal{d}X  \textnormal{d}t  \\
    &\quad - \int_{t\in [0,T]}
    \int_{X\in \Omega_0}
    \psi\big( \mathcal{G}(X,t)\big) 
         \tr\big(F^{-T}\nabla_X \delta x\big)
         \det F \,\textnormal{d}X  \textnormal{d}t.
\end{align*}
At this stage, we recast the integral back in Eulerian coordinates:
\begin{align*}
\delta  A(x(\cdot ))
&= 
\int_{t\in [0,T]}
\int_{\Omega_t^x}
    \partial_{\nabla \phi}\psi 
    \big(\phi,\nabla_x \phi ,\theta\big)\otimes \nabla_x\phi :  \nabla_x \delta x 
    -  \psi\big(\phi,\nabla_x \phi ,\theta\big)\nabla_x\cdot\delta x \,
 \textnormal{d}x  \textnormal{d}t.
\end{align*}
Using integration by parts yields
\begin{align*}
\delta  A(x(\cdot ))
&=-
\int_{t\in [0,T]}
\int_{\Omega_t^x} \big\{
    \nabla_x \cdot \big(\partial_{\nabla \phi}\psi \otimes \nabla \phi\big)  - \nabla_x \psi  \big\}\cdot\delta x \,\textnormal{d}x  \textnormal{d}t,
\end{align*}
where the boundary terms on $\partial \Omega^x_t$ vanish because $\delta x$ is supported in the interior $\mathring{\Omega}_0\times (0,T)$. The function in front of $\delta x$ can be further simplified as follows: 
\begin{align*}
\nabla\cdot\big(\partial_{\nabla \phi} \psi \otimes \nabla \phi\big)-\nabla \psi &=\nabla\cdot\partial_{\nabla \phi} \psi \nabla \phi+\partial_{\nabla \phi} \psi \nabla^2\phi -\partial_\phi \psi\nabla \phi-\partial_{\nabla \phi} \psi \nabla^2\phi -\partial_\theta \psi \nabla \theta \\
& = \big(\nabla\cdot\partial_{\nabla \phi} \psi -\partial_\phi \psi\big) \nabla \phi-\partial_\theta \psi \nabla \theta = \mu \nabla \phi +s \nabla\theta,
\end{align*}
where $\mu=\mu(\phi,\theta)=\nabla\cdot\partial_{\nabla \phi} \psi -\partial_\phi \psi$ denotes the chemical potential.
Thus, we conclude the first part of the proof by recalling identity \eqref{eq:action-fcons}.

Assuming that $(A2)$ holds, the proof follows along the same lines, with an additional term $\partial_\theta \psi \nabla \theta$ appearing during the last step of the least action principle.
%where in the last step of the least action principle an additional term $\partial_\theta \psi \nabla \theta$ would appear in the last integral.
Therefore, in this case, the least action principle yields the conservative force
\begin{equation*}
    f_{cons}= \mu \nabla \phi.
\end{equation*}

Now, let assumption (A3) for the temperature hold. Since there are two flow maps $x(X, t)$ and $y(X, t)$ associated with the velocity $u$ and $v$ respectively, we have to apply the least action principle to both flow maps.
For the flow map $x(X,t)$ moving with velocity $u$, we obtain the same contribution to the conservative forces as in case (A2) as $\theta$ is not transported by $u$.
However, as the temperature is transported by $v$ we obtain for the flow map $y(X,t)$ moving with velocity $v$ that 
\begin{align*}
  \delta  A(y(\cdot )) &= 
    % \int_{t\in [0,T]}     \int_{\Omega_t^y}  \big\{ -(\partial_t v+v\cdot\nabla v)-\nabla\big(\frac{1}{2}|v|^2-\psi(\theta)\big)\big\}\cdot \delta y \,\textnormal{d}y\textnormal{d} t,
    \int_{t\in [0,T]}     \int_{\Omega_t^y}  \{ -(\partial_t v+v\cdot\nabla v)-\nabla p + s \nabla \theta\} \cdot \delta y \,\textnormal{d}y\textnormal{d} t,
\end{align*}
where $p$ is the Lagrange multiplier for the incompressible condition, $\det F = 1$.
The last term follows from the fact that the free energy under the flow $y$ only depends on the temperature $\theta$.
\end{proof}

The next step in the EnVarA is to compute the dissipative forces of the system using the maximum dissipation principle, which states that these forces  can be obtained by the variation
of the so-called dissipation functional $\mathcal{D}$ with respect to the velocity fields. 
In the current study, we consider a dissipation functional of the following form
\begin{align}\label{eq:dissipation}
    \mathcal{D}=\int_\Omega  \eta(\phi,\theta)|(u - v)\cdot\nabla \phi|^2 +\nu |\nabla v|^2\, \textnormal{d}x ,
\end{align}
where $v = 0$ in cases (A1) and (A2). The difference $u-v$ is the so-called effective velocity of the system.
Here, $\eta$ denotes a positive friction parameter, depending on the state variables $\phi$ and $\theta$. In the setting of phase transitions the inverse of this parameter is also known as the mobility. In the classical Allen-Cahn equation, $\eta$ is typically chosen to be $O(\epsilon)$, in order to see the evolution of the interface \cite{bronsard1991motion, ilmanen1993convergence}.

\begin{remark}
The choice of dissipation (\ref{eq:dissipation}) is consistent with that in the classical Allen-Cahn equation, which is often interpreted as an $L^2$-gradient flow associated with a given free energy $\mathcal{F}(\phi) = \int \psi \dd x$. The dissipation of a Allen-Cahn equation with convection velocity $v$, is often written as \cite{giga2017variational}
\begin{equation}
 \mathcal{D} = \int_{\Omega} \eta |\phi_t + v \cdot \nabla \phi|^2 \dd \x\ ,
\end{equation}
where $v$ is the macroscopic background velocity.
Due to the kinematics (\ref{eq: kinematics}), one can replace $\phi_t$ by $- u \cdot \nabla \phi$ \cite{liu2020variational}, which leads to the first term in (\ref{eq:dissipation}).
\end{remark}

Mathematically, the maximum dissipation principle states that
\begin{align*}
    \frac{1}{2}\delta \mathcal{D}(u)= \int_\Omega f_{diss} \cdot \delta u \,\textnormal{d}u
\end{align*}
and similar for the dissipation force associated with the velocity $v$.
Hence, for cases (A1) and (A2), we have
\begin{align*}
    f_{diss}= \eta(\phi,\theta) (u \cdot \nabla \phi) \nabla \phi
\end{align*}
and for case (A3), we have
\begin{align*}
    f_{diss}^u&= \eta(\phi,\theta) ( (u - v) \cdot \nabla \phi ) \nabla \phi,\\
    f_{diss}^v&= \eta(\phi,\theta) ( (v - u) \cdot \nabla \phi ) \nabla \phi -\nu \Delta v.
\end{align*}
Applying Newton's force balance law we obtain the following result:
\begin{theorem}\label{thm: force balance}
  Assume that the assumptions on the flow maps of the previous theorem still hold. 
  Then
  \begin{enumerate}[label=(\roman*)]
      \item If the temperature satisfies the assumption (A1), then the equation of motion for $\phi$ is given by
      \begin{align*}
          \partial_t\phi +u\cdot\nabla\phi = 0, \quad  - \eta(\phi,\theta) (u \cdot \nabla \phi) \nabla \phi  = \mu \nabla \phi +s\nabla\theta.
      \end{align*}
      \item If the temperature satisfies the assumption (A2), then the equation of motion for $\phi$ is given by 
      \begin{align*}
          \partial_t\phi +u\cdot\nabla\phi = 0, \quad - \eta(\phi,\theta) (u \cdot \nabla \phi) \nabla \phi  = \mu \nabla \phi.
      \end{align*}
     which can be rewritten as
      \begin{align}\label{non_ac_1}
           \partial_t\phi = - \eta(\phi,\theta)^{-1} \mu  % \big(\nabla\cdot\partial_{\nabla \phi} \psi -\partial_\phi \psi\big).
      \end{align}
The equation (\ref{non_ac_1}) is the classical Allen-Cahn equation when $\theta = \theta_c$.

      \item If the temperature satisfies the assumption (A3), then the equation of motion for $\phi$ is given by
      \begin{align*}
           \partial_t\phi +u\cdot\nabla\phi =0, \quad - \eta(\phi,\theta) ((u-v) \cdot \nabla \phi ) \nabla \phi  = \mu \nabla \phi
      \end{align*}
      and the equation  for $v$ is given by
      \begin{align*}
          \partial_t v+ (v\cdot\nabla) v +\nabla p&= -\mu \nabla \phi + \nu \Delta v -s\nabla \theta\\
          \nabla\cdot v&=0.
      \end{align*}
  \end{enumerate}
\end{theorem}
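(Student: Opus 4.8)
The plan is to assemble the equations of motion directly from the force computations already in hand: Theorem~\ref{thm:conservative-forces} supplies the inertial and conservative forces $f_{inertial}$, $f_{cons}$ for each of the temperature kinematics (A1)--(A3), while the maximum dissipation principle applied to the dissipation functional \eqref{eq:dissipation} supplies the dissipative forces $f_{diss}$ (and $f_{diss}^u$, $f_{diss}^v$ in case (A3)) recorded just above the statement. Newton's force balance, imposed on each velocity field in the form $f_{inertial}=f_{cons}+f_{diss}$, then closes the system; the scalar transport equation \eqref{eq: kinematics} is carried along as the kinematic relation defining $u$ (and, in case (A3), the incompressibility $\nabla\cdot v=0$ defining $v$ together with the pressure $p$).

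For cases (i) and (ii), $E^{kin}=0$ forces $f_{inertial}=0$, so the balance collapses to $f_{cons}+f_{diss}=0$; substituting $f_{cons}=\mu\nabla\phi+s\nabla\theta$ (respectively $f_{cons}=\mu\nabla\phi$) and $f_{diss}=\eta(\phi,\theta)(u\cdot\nabla\phi)\nabla\phi$ gives exactly the stated vector identities, and appending \eqref{eq: kinematics} yields the coupled systems in (i) and (ii). To pass from the case-(ii) identity to the closed scalar equation \eqref{non_ac_1}, one uses that it is a pointwise equality of two vectors both parallel to $\nabla\phi$: on $\{\nabla\phi\neq 0\}$ it reduces to $-\eta(\phi,\theta)(u\cdot\nabla\phi)=\mu$, and eliminating $u\cdot\nabla\phi$ through $u\cdot\nabla\phi=-\partial_t\phi$ from \eqref{eq: kinematics} produces \eqref{non_ac_1}. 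Specializing $\theta=\theta_c$ in \eqref{eq:chem potential Allen Cahn} then recovers the classical Allen--Cahn operator, giving the final claim in (ii).

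For case (iii) the balance is imposed on both flow maps. For the $x$-flow, $f_{inertial}^u=0$, $f_{cons}^u=\mu\nabla\phi$ and $f_{diss}^u=\eta(\phi,\theta)((u-v)\cdot\nabla\phi)\nabla\phi$, which as before gives the stated $\phi$-equation and, equivalently, the identity $\eta(\phi,\theta)((v-u)\cdot\nabla\phi)\nabla\phi=\mu\nabla\phi$. For the $y$-flow, $f_{inertial}^v=-(\partial_t v+v\cdot\nabla v)$, $f_{cons}^v=\nabla p$ and $f_{diss}^v=\eta(\phi,\theta)((v-u)\cdot\nabla\phi)\nabla\phi-\nu\Delta v$, so $f_{inertial}^v=f_{cons}^v+f_{diss}^v$ becomes $-(\partial_t v+v\cdot\nabla v)=\nabla p+\eta(\phi,\theta)((v-u)\cdot\nabla\phi)\nabla\phi-\nu\Delta v$; replacing the friction term by $\mu\nabla\phi$ using the $x$-flow identity and rearranging gives the momentum equation, with $\nabla\cdot v=0$ carried over from Theorem~\ref{thm:conservative-forces}(iii).

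The only step that is not pure bookkeeping --- and hence the one I would state with care --- is the reduction of a vector force balance of the form $(\cdots)\nabla\phi=(\cdots)\nabla\phi$ to a scalar equation for $\phi$: this requires cancelling $\nabla\phi$ on the set where it does not vanish and then using the transport equation. Equivalently, one may observe that the dissipation \eqref{eq:dissipation} together with the kinematics \eqref{eq: kinematics} is precisely the dissipation of an $L^2$-gradient flow in $\phi$, so the force balance and the gradient-flow equation \eqref{non_ac_1} encode the same dynamics; with that observation in place, the remainder of the proof is just substitution of the forces obtained in the preceding results.
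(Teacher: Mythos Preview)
Your proposal is correct and follows exactly the same approach as the paper: invoke Newton's force balance $f_{cons}+f_{diss}=f_{inertial}$, note that the inertial term vanishes in cases (A1) and (A2), and balance the forces on each flow map separately in case (A3). In fact you are more thorough than the paper's three-sentence proof, which does not discuss the scalar reduction to \eqref{non_ac_1} at all (it is simply asserted in the theorem statement) and does not spell out the substitution of the $u$-balance identity $\eta(\phi,\theta)((v-u)\cdot\nabla\phi)\nabla\phi=\mu\nabla\phi$ into the $v$-balance to obtain the momentum equation in the stated form.
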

\begin{proof}
    %With both conservative and dissipative forces derived 
    The force balance condition states in general
    \begin{align*}
        f_{cons}+ f_{diss}=f_{inertial}.
    \end{align*}
    For cases (A1) and (A2) there are no inertial forces and thus the right-hand side equals zero.
    For case (A3) we have to balance the forces for each flow map and keep in mind the inertial forces arriving for the $v$-equation.
\end{proof}

\subsection{The temperature equation}

What remains now is to derive the equation of the temperature $\theta$ or entropy $s$. The derivation relies on the first and second laws of thermodynamics, with the latter expressed through the Clausius-Duhem inequality.

The Clausius-Duhem inequality provides a mathematical expression of the second law of thermodynamics, the increase of the total entropy.  It can be written in the following equivalent forms:
\begin{equation}\label{eq:clausius-duhem}
\begin{aligned}
  &\text{(A1)} \qquad  \theta ( \partial_t s +\nabla \cdot (us) )  = \theta \nabla \cdot j + \theta \Delta^*, \\
  &\text{(A2)} \qquad  \theta \partial_t s = \theta \nabla \cdot j + \theta \Delta^*,\\
  &\text{(A3)}\qquad \theta( \partial_t s +\nabla \cdot (vs)) = \theta \nabla \cdot j + \theta \Delta^*,
\end{aligned}
\end{equation}
where $j$ is the entropy flux specified by
\begin{align*}
    j=- \frac{q}{\theta},
\end{align*}
with $q$ being the heat flux, and $\Delta^*\geq 0$ is the point-wise rate of entropy production.
The different forms of the entropy equation again come from the different assumptions on the kinematics of the temperature $\theta$ and the duality of the entropy $s$ with respect to the temperature $\theta$. 

According to the Fourier's law, the heat flux is given by
\begin{equation}\label{Fourier}
    q=k(\phi,\theta) \nabla\theta,
\end{equation}
where $k=k(\phi,\theta)>0$ is the thermal conductivity.
Hence, the equation of the entropy $s$ is determined as if the the rate of entropy production $\Delta^*$ is given.

\begin{remark}
   The Fourier law is a classical assumption on the heat flux. Alternative formulations have also been proposed in the literature. For instance, a rescaled version, $q=k\frac{\nabla \theta}{\theta}$, can be found in the literature, see for example \cite{miranville2005nonisothermal}.
   Another choice is to assume a time dependent relation $\partial_t q + q=k\nabla\theta$, which is known as Cattaneo's law \cite{mariano2022sources}. 
\end{remark}

To determine the form of $\Delta^*$, one need to use first law of thermodynamics, which states that the total energy for an isolated system, where no work is done on or by the system and no heat is exchanged with the environment, remains constant, i.e., $\frac{\textnormal{d}}{\textnormal{d}t} E^{tot}(t)=0$. To enforce this condition, we assume that the heat flux satisfies a non-flux boundary condition, i.e.,
\begin{align}\label{eq: heat flux boundary}
    q\cdot n=0.
\end{align}
To define the total energy we introduce the internal energy density $e = e(\phi,\theta)$ given by a Legendre transform of the free energy with respect to the temperature
\begin{align}\label{eq: internal energy}
    e(\phi,\theta):=\psi(\phi,\theta)-\theta\partial_\theta \psi(\phi,\theta) = \psi(\phi,\theta)+ \theta s.
\end{align}
Then, the total energy is given by
\begin{align}\label{eq: total energy}
    E^{tot}(t)= \int_\Omega e(\phi,\theta) \,\textnormal{d}x + E^{kin}(t),
\end{align}
where the last term represents the contribution from the kinetic energy, which plays a role for case (A3).
More precisely, we can have the following result:

\begin{theorem}\label{thm:entropy-production}
    Under the Fourier law (\ref{Fourier}), %if the following relations on the entropy production holds true:
    \begin{enumerate}[label=(\roman*)]
        \item If the temperature satisfies (A1) and the rate of entropy production is given by   \begin{equation}\label{eq:entropyproductionA1}
            \theta \Delta^* =   |u \cdot\nabla \phi|^2\eta(\phi,\theta) + \frac{k(\phi,\theta) |\nabla \theta|^2}{\theta},
        \end{equation}
       
        \item if the temperature satisfies (A2)  and the the rate of entropy production is given by \begin{equation}\label{eq:entropyproductionA2}
                \theta \Delta^*= |u \cdot\nabla \phi|^2\eta(\phi,\theta) + \frac{k(\phi,\theta)|\nabla\theta|^2}{\theta},
        \end{equation}

        \item If the temperature satisfies (A3) and the rate of entropy production is given by  \begin{equation}\label{eq:entropyproductionA3}
            \theta \Delta^*= |(u-v)\cdot\nabla \phi|^2\eta(\phi,\theta)+\nu |\nabla v|^2 + \frac{k(\phi,\theta)|\nabla\theta|^2}{\theta},
        \end{equation}

    \end{enumerate}
     then the total energy of the system is conserved.
\end{theorem}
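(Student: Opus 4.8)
The plan is to prove $\frac{\dd}{\dd t}E^{tot}(t)=0$ by differentiating $E^{tot}(t)=\int_\Omega e(\phi,\theta)\,\dd x+E^{kin}(t)$ from \eqref{eq: total energy} in time and checking that, once the equations of motion of Theorem~\ref{thm: force balance}, the entropy balance \eqref{eq:clausius-duhem} with the prescribed rate of entropy production, Fourier's law \eqref{Fourier}, and the boundary data are inserted, every term cancels. One works under the standing assumptions that $\theta>0$ and that solutions are smooth enough to justify the integrations by parts, together with the boundary conditions that make $E^{tot}$ closed: $u\cdot n=0$ from \eqref{eq: kinematics}, the natural condition $\nabla\phi\cdot n=0$ for the gradient term in $\psi$, the adiabatic condition $q\cdot n=0$, and, in case (A3), $v\cdot n=0$ together with $v=0$ or $\nabla v\cdot n=0$ on $\partial\Omega$. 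These are the ``appropriate boundary conditions'' alluded to before the statement.

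First I would differentiate the internal energy. From $e=\psi+\theta s$ (see \eqref{eq: internal energy}) and the fundamental relation $\partial_\theta\psi=-s$, the chain rule gives $\partial_t e=\partial_\phi\psi\,\partial_t\phi+\partial_{\nabla\phi}\psi\cdot\partial_t\nabla\phi+\theta\,\partial_t s$, the two occurrences of $s\,\partial_t\theta$ cancelling. Integrating over $\Omega$, integrating the two $\phi$-terms by parts (the boundary term vanishing by $\nabla\phi\cdot n=0$) and recalling $\mu=\nabla\cdot\partial_{\nabla\phi}\psi-\partial_\phi\psi$ from \eqref{eq: chemical potential} yields
\[
\frac{\dd}{\dd t}\int_\Omega e\,\dd x=-\int_\Omega\mu\,\partial_t\phi\,\dd x+\int_\Omega\theta\,\partial_t s\,\dd x .
\]
The first, ``mechanical'', integral is then rewritten with the $\phi$-equation of Theorem~\ref{thm: force balance}: the transport equation gives $\partial_t\phi=-u\cdot\nabla\phi$, and testing the force balance against $u$ turns $-\int_\Omega\mu\,\partial_t\phi$ into $-\int_\Omega\eta|u\cdot\nabla\phi|^2\,\dd x$ in cases (A1)--(A2), and into $-\int_\Omega\eta\big((u-v)\cdot\nabla\phi\big)(u\cdot\nabla\phi)\,\dd x$ in case (A3); in case (A1) the extra term $s\nabla\theta$ of the conservative force in Theorem~\ref{thm:conservative-forces}(i) contributes a further $-\int_\Omega s\,(u\cdot\nabla\theta)\,\dd x$.

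Next I would treat $\int_\Omega\theta\,\partial_t s\,\dd x$ by substituting the entropy balance \eqref{eq:clausius-duhem}. The divergence-of-entropy-flux term $\int_\Omega\theta\,\nabla\cdot j\,\dd x$, integrated by parts using $q\cdot n=0$ together with $j=-q/\theta$ and Fourier's law \eqref{Fourier}, is exactly what is needed to cancel the heat-conduction contribution $k|\nabla\theta|^2/\theta$ inside $\theta\Delta^*$ in \eqref{eq:entropyproductionA1}--\eqref{eq:entropyproductionA3}. The convective term $\nabla\cdot(us)$ (resp.\ $\nabla\cdot(vs)$) present in (A1) and (A3), integrated by parts with $u\cdot n=0$ (resp.\ $v\cdot n=0$ and $\nabla\cdot v=0$), produces $+\int_\Omega s\,(u\cdot\nabla\theta)\,\dd x$ (resp.\ $+\int_\Omega s\,(v\cdot\nabla\theta)\,\dd x$). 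What survives from $\theta\Delta^*$ is precisely the mechanical dissipation, $\int_\Omega\eta|u\cdot\nabla\phi|^2\,\dd x$ in (A1)--(A2) and $\int_\Omega\big(\eta|(u-v)\cdot\nabla\phi|^2+\nu|\nabla v|^2\big)\dd x$ in (A3). Assembling: in (A2) the two mechanical terms cancel, $\frac{\dd}{\dd t}\int_\Omega e\,\dd x=0$, and since $E^{kin}\equiv0$ we are done; in (A1) the two terms $\pm\int_\Omega s\,(u\cdot\nabla\theta)$ cancel as well --- this is exactly why the conservative force must carry $s\nabla\theta$ --- so again $\frac{\dd}{\dd t}E^{tot}=0$. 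In (A3) one additionally differentiates $E^{kin}=\int_\Omega\frac12|v|^2\,\dd x$ and substitutes the $v$-equation of Theorem~\ref{thm: force balance}(iii): the pressure and convection terms drop by $\nabla\cdot v=0$ and the boundary conditions, the viscous term gives $-\nu\int_\Omega|\nabla v|^2\,\dd x$ (cancelling $\nu|\nabla v|^2$ from $\theta\Delta^*$), and the $-\mu\nabla\phi$ forcing, after using the $\phi$-force balance, gives a term that together with the remaining $\eta$-terms telescopes via $(u-v)\cdot\nabla\phi=u\cdot\nabla\phi-v\cdot\nabla\phi$; the leftover $\int_\Omega s\,(v\cdot\nabla\theta)$ is absorbed by the corresponding piece of the conservative force for the $v$-flow, playing the role that $s\nabla\theta$ plays in (A1).

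The argument contains no single hard estimate --- it is a systematic energy balance --- so the real work, and the only place things can go wrong, is the bookkeeping in cases (A1) and (A3): one must verify that the convective contributions from $\nabla\cdot(us)$ and $\nabla\cdot(vs)$ in the entropy balance are matched term by term by the transport/coupling contributions of the conservative forces (the $s\nabla\theta$ in (A1); the shared $\mu\nabla\phi$ linking the $\phi$- and $v$-equations in (A3)), and that the sign conventions in \eqref{eq:clausius-duhem} and \eqref{Fourier} are aligned so that the entropy-flux term cancels, rather than reinforces, the conduction part of $\theta\Delta^*$. The specific forms chosen for the conservative forces in Theorem~\ref{thm:conservative-forces} and for the entropy productions in \eqref{eq:entropyproductionA1}--\eqref{eq:entropyproductionA3} are precisely what make this cancellation work.
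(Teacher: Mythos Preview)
Your strategy coincides with the paper's: differentiate $\int_\Omega e\,\dd x$ using $e=\psi+\theta s$ together with $\partial_\theta\psi=-s$ to reach $-\int_\Omega\mu\,\partial_t\phi\,\dd x+\int_\Omega\theta\,\partial_t s\,\dd x$, then feed the transport equation and force balance into the first integral and the Clausius--Duhem relation \eqref{eq:clausius-duhem} into the second, and verify that every contribution cancels once the prescribed $\theta\Delta^*$ is inserted and the boundary terms are dropped. For cases (A1) and (A2) your bookkeeping matches the paper's computation line by line, including the cancellation of the $\pm\int_\Omega s\,(u\cdot\nabla\theta)$ pair in (A1).

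The one place your outline does not match the paper's stated setup is the final clause on case (A3). You claim the leftover $\int_\Omega s\,(v\cdot\nabla\theta)\,\dd x$ ``is absorbed by the corresponding piece of the conservative force for the $v$-flow, playing the role that $s\nabla\theta$ plays in (A1)''. But Theorem~\ref{thm:conservative-forces}(iii) gives $f^v_{cons}=\nabla p$ only, and the $v$-equation in Theorem~\ref{thm: force balance}(iii) carries no $s\nabla\theta$ term; so within the equations as written there is nothing available to absorb this contribution in the way you propose. The paper's own proof arrives at exactly the same leftover $\int_\Omega s\,v\cdot\nabla\theta\,\dd x$ and groups it with the pressure term $-\int_\Omega v\cdot\nabla p$ and the heat-flux boundary integral, declaring the whole package zero ``by using the incompressibility of $v$ and the boundary conditions for the system''. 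Your instinct that this term needs special attention is therefore sound, but the mechanism you invoke is not the one supplied by the given equations; if you want a fully self-contained argument you should either verify directly that this integral vanishes under the stated hypotheses or flag that consistency of the (A3) energy balance hinges on how the free energy is assigned to the $y$-flow in the least-action step.
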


\begin{proof}
    We start by computing the the following
    \begin{align*}
        \frac{\textnormal{d}}{\textnormal{d}t}E^{tot}(t)= \frac{\textnormal{d}}{\textnormal{d}t} \int_\Omega e(\phi,\theta)\, \textnormal{d}x+ \frac{\textnormal{d}}{\textnormal{d}t}E^{kin}(t)
    \end{align*}
For the first term we have 
\begin{align*}
    & \frac{\textnormal{d}}{\textnormal{d}t} \int_\Omega e(\phi,\theta)\, \textnormal{d} x= \int_\Omega  \partial_t [\psi(\phi, \theta)] + \partial_t[\theta\, s(\phi, \theta)] \,\textnormal{d} x\\
    & =\int_\Omega \partial_\phi \psi(\phi, \theta) \partial_t \phi + \partial_{\nabla \phi}\psi(\phi,\theta) \cdot \partial_t \nabla \phi + \partial_\theta \psi(\phi, \theta) \partial_t \theta + (\partial_t \theta)s(\phi, \theta) + \theta \partial_t s(\phi, \theta) \,\textnormal{d} x. 
\end{align*}

Recalling that $ \partial_\theta \psi(\phi, \theta)=-s(\phi,\theta)$ and using integration by parts, we obtain
    \begin{align*}
         \frac{\textnormal{d}}{\textnormal{d}t} \int_\Omega e(\phi,\theta)\, \textnormal{d} x&= \int_\Omega ( \psi(\phi, \theta)- \nabla\cdot\partial_{\nabla \phi}\psi(\phi,\theta))\partial_t\phi + \theta\partial_t [s(\phi, \theta)] \,\textnormal{d} x\\
        &=\int_\Omega - ( \psi(\phi, \theta)- \nabla\cdot\partial_{\nabla \phi}\psi(\phi,\theta))u\cdot\nabla\phi + \theta\partial_t s(\phi, \theta) \,\textnormal{d} x\\
         &=\int_\Omega \mu(\phi,\theta)u\cdot\nabla\phi + \theta\partial_t s(\phi, \theta) \,\textnormal{d} x,
    \end{align*}
where we used the definition of the chemical potential $\mu=\mu(\phi,\theta)$.
To proceed, we have to use the different assumptions on the temperature. 

If (A1) holds, then we have by relation \eqref{eq: internal energy} and \eqref{eq: total energy} that
\begin{align*}
     \frac{\textnormal{d}}{\textnormal{d}t} \int_\Omega e(\phi,\theta)\, \textnormal{d} x&= \int_\Omega u\cdot(-u\cdot \nabla \phi\nabla\phi \eta(\phi,\theta)-s\nabla\theta) -\theta \nabla\cdot(s u) +\theta\nabla\cdot j+\theta\Delta^*\textnormal{d} x\\
     &=\int_\Omega -|u\cdot\nabla\phi|^2 \eta(\phi,\theta) +\nabla\cdot q-\frac{k(\phi,\theta)|\nabla\theta|^2}{\theta}+\theta\Delta^*\, \textnormal{d} x. 
\end{align*}
Using the boundary conditions for the heat flux $q$ \eqref{eq: heat flux boundary}, we have
\begin{align*}
    \frac{\textnormal{d}}{\textnormal{d}t}E^{tot}(t)= \int_\Omega -|u\cdot\nabla\phi|^2 \eta(\phi,\theta) +\nabla\cdot q-\frac{k(\phi,\theta)|\nabla\theta|^2}{\theta}+\theta\Delta^* \, \textnormal{d} x=0
\end{align*}
as long as identity \eqref{eq:entropyproductionA1} is satisfied.
This concludes the proof Theorem \ref{thm:entropy-production}, part $(i)$. The argument to prove $(ii)$ under assumption (A2) is analogous.

Now, let us assume that assumption (A3) holds. In this case, we must consider the contribution of the kinetic energy to the total energy. Hence,
    \begin{align*}
         \frac{\textnormal{d}}{\textnormal{d}t}E^{kin}(t)& = \frac{d}{dt}\int_\Omega \frac{1}{2}|v|^2\, \textnormal{d}x
        = \int_\Omega (\partial_t v+ (v\cdot\nabla)v)\cdot v  \,\textnormal{d}x\\
         &= \int_\Omega ((u-v)\cdot\nabla\phi\nabla\phi \eta(\phi,\theta)+ \nu \Delta v-\nabla p-s\nabla \theta)\cdot v \, \textnormal{d}x.
    \end{align*}
    From the change in the internal energy we have
    \begin{align*}
          \frac{\textnormal{d}}{\textnormal{d}t} \int_\Omega e(\phi,\theta)\, \textnormal{d} x&= \int_\Omega u\cdot (-(u-v)\cdot\nabla\phi\nabla\phi \eta(\phi,\theta))  -\theta \nabla\cdot(s v) +\theta\nabla\cdot j+\theta\Delta^* \,\textnormal{d} x.
    \end{align*}
    Combining the two integral identities and using integration by parts,  we obtain
    \begin{align*}
         \frac{\textnormal{d}}{\textnormal{d}t}E^{tot}(t)&=  \int_\Omega -|(u-v)\cdot\nabla \phi|^2\eta(\phi,\theta)-\nu |\nabla v|^2 -\frac{k(\phi,\theta)|\nabla\theta|^2}{\theta}+\theta\Delta^* \, \textnormal{d} x\\
        &+\int_\Omega -v\cdot\nabla p +\nabla\cdot q \, \textnormal{d} x=0,
    \end{align*}
where the last integral vanishes due to the incompressibility of $v$, the non-flux boundary condition for the heat flux (\ref{eq: heat flux boundary}), and the boundary condition $v \cdot n = 0$.
Thus, the identity holds as long as identity \eqref{eq:entropyproductionA3} is satisfied. 
This concludes the proof of the theorem.   
\end{proof}

\subsection{Summary of non-isothermal systems}
Now, we can summarize three non-isothermal diffuse interface models derived in this section by combining the results from Theorems \ref{thm:conservative-forces}, \ref{thm: force balance} and \ref{thm:entropy-production}. The equations are defined on the domain $(x,t)\in \Omega\times (0,T)$ and they shall be supplemented by a proper boundary conditions on  $\partial \Omega\times (0,T)$, say the non-flux boundary condition, given by $\nabla\phi\cdot n=0,~~\nabla \theta\cdot n =0,~~ u\cdot n=0$. 
    
Recall the definitions of the entropy $s$ and the chemical potential $\mu$
\begin{align*}
    s(\phi,\theta)&= \chi(\epsilon)\frac{\lambda}{\theta_c}g(\phi)+ c_v \log \theta,\\
    \mu(\phi,\theta)&= \epsilon\Delta \phi-\frac{1}{\epsilon} W'(\phi)-\chi(\epsilon)\frac{\lambda}{\theta_c}(\theta-\theta_c)g'(\phi).
\end{align*}
Assuming that the assumption (A1) of Section \ref{sec: kinematics of temp} holds true, the following system on $(\phi(x,t),\theta(x,t))$ satisfies both the first and second laws of thermodynamics:
\begin{equation*}
\begin{cases}
     & \partial_t \phi +u \cdot \nabla\phi =0, \quad -u\cdot \nabla \phi \nabla \phi \eta(\phi,\theta) = \mu \nabla \phi +s\nabla\theta,\\
     & \theta (\partial_t s+ \nabla\cdot (us)) = \theta \nabla \cdot \left( \frac{\kappa(\phi,\theta)\nabla\theta}{\theta}\right) + \theta \Delta^*, \quad \theta\Delta^* = |u\nabla \phi|^2\eta(\phi,\theta)+ \frac{\kappa(\phi,\theta)|\nabla\theta|^2}{\theta},\\
\end{cases}
\end{equation*}

\begin{remark}
We note that at least formally we can combine the first two equations yielding
   \begin{align*}
       \partial_t\phi= \eta(\phi,\theta)^{-1} \mu + \frac{s \nabla\theta \cdot \nabla \phi}{\eta(\phi,\theta) |\nabla\phi|^2}.
   \end{align*}
   Thus, states with constant phase field value are penalized unless the temperature is also constant.
   Therefore, it is a reasonable choice to work with the logarithmic potential in this setting as it prevents the pure states where $\phi=\pm 1$.
   Additionally, we observe that due to assumption (A1) and the resulting coupling between temperature and phase variable, the temperature also evolves on a fast scale.
\end{remark}

Let assumption (A2) hold. Then, the following system on the state variables $(\phi(x,t),\theta(x,t))$ satisfies both the first and second laws of thermodynamics
\begin{equation*}
\begin{cases}
     &  \partial_t \phi +u \cdot \nabla\phi =0, \quad -u\cdot \nabla \phi \nabla \phi \eta(\phi,\theta) = \mu \nabla \phi,\\
    & \theta \partial_t s = \theta \nabla \cdot \left( \frac{\kappa(\phi,\theta)\nabla\theta}{\theta}\right) + \theta \Delta^*, \quad \theta\Delta^* = |u\nabla \phi|^2\eta(\phi,\theta)+ \frac{\kappa(\phi,\theta)|\nabla\theta|^2}{\theta},\\
    % \nabla\phi\cdot n=0,\quad \nabla \theta\cdot &=0,\quad u\cdot n=0,\\
    % \phi(0)=\phi_0,\quad &\theta(0)=\theta_0.
\end{cases}
\end{equation*}
In this setting, we can combine the first two equations, which give the classical Allen-Cahn equation and in addition we explicitly compute the terms in the entropy equation. Hence, we obtain
  \begin{align*}
      \partial_t \phi& =  - \eta(\phi,\theta)^{-1} \mu,\\
      c_v \partial_t \theta&= \nabla\cdot (\kappa(\phi,\theta)\nabla\theta)-\chi(\epsilon)\frac{\lambda}{\theta_c}\theta g'(\phi)\partial_t \phi +\eta(\phi,\theta)|\partial_t \phi|^2.
  \end{align*}
  For a system of this form the authors of \cite{lasarzik2022weak} were able to show the well-posedness of weak solutions.

Finally, let assumption (A3) hold.  Then, the following system on the state variables $(\phi(x,t),\theta(x,t))$ satisfies both the first and second laws of thermodynamics
\begin{equation*}
\begin{cases}
    & \partial_t\phi +u\cdot\nabla\phi = 0, \quad -(u-v) \cdot\nabla\phi \nabla \phi \eta(\phi,\theta) = \mu \nabla \phi,\\
     & \partial_t v+ (v\cdot\nabla)v +\nabla p = \lambda\Delta v -\mu \nabla\phi -s\nabla\theta, \quad \nabla \cdot v =0,\\
    & \theta( \partial_t s+ \nabla\cdot (sv)) = \theta \nabla \cdot \left( \frac{\kappa(\phi,\theta)\nabla\theta}{\theta}\right) + \theta \Delta^*,  \\
    &   \theta\Delta^* = |(u-v)\nabla \phi|^2\eta(\phi,\theta)+ \lambda|\nabla v|^2 + \frac{\kappa(\phi,\theta)|\nabla\theta|^2}{\theta},\\
    % \nabla\phi\cdot n=0,\quad \nabla \theta\cdot &=0,\quad u\cdot n=0,\\
    % \phi(0)=\phi_0,\quad &\theta(0)=\theta_0.
\end{cases}
\end{equation*}
Simplify the above equations, we end up with a coupled Allen-Cahn-Navier-Stokes-Fourier system
    \begin{align*}
        & \partial_t \phi = \eta(\phi,\theta)^{-1} \mu  - v\cdot\nabla \phi,\\
        & \partial_t v+ (v\cdot\nabla)v +\nabla p = \lambda\Delta v -\mu \nabla\phi -(\chi(\epsilon)\frac{\lambda}{\theta_c}g(\phi)+ c_v \log \theta)\nabla\theta, \quad \nabla \cdot v =0,\\
     & c_v \partial_t \theta+ c_v v\cdot\nabla	\theta  =\nabla\cdot (\kappa(\phi,\theta)\nabla\theta) -\chi(\epsilon)\frac{\lambda}{\theta_c}\theta g'(\phi) \frac{\mu}{\eta(\phi,\theta)}+  \frac{|\mu|^2}{\eta(\phi,\theta)}. 
    \end{align*}

\begin{remark}
%In some recent work \cite{}, the authors propose a free energy takes the form of
% Following the idea in recent work on non-isothermal phase-field crystal models \cite{sun2024thermodynamically}, 
Since the starting point of model derivation is the form of the free energy, different choices of free energy can lead to different models. For example, one might choose the free energy as
\begin{equation}
\psi(\phi,\theta)  = \theta( \frac{\epsilon}{2} |\nabla \phi|^2 + \frac{1}{\epsilon} W(\phi)  + \frac{\chi}{2 \sigma} (\theta - \theta_c) \phi ) + c_v \theta (1 - \log \theta),
\end{equation}
as used in a recent work on non-isothermal phase-field crystal models \cite{sun2024thermodynamically}. Then the entropy is given by
\begin{equation}
s = - \psi_{\theta} = - \frac{\chi}{2 \sigma} (2 \theta - \theta_c) \phi + c_v \log \theta
\end{equation}
and the internal energy is given by
\begin{equation}
  e = \psi + \theta s = c_v \theta - \frac{\chi}{2 \sigma} \theta_c \phi.
\end{equation}
% With this free energy we can recover 
This choice of free energy leads to a heat equation that similar to that in Caginalp's model \cite{caginalp1986analysis,caginalp1990dynamics,caginalp1995derivation,caginalp1998convergence}, in which the temperature equation is linearly depend on $\theta$ and $\phi$.
\end{remark}

\section{The Stefan type problem as a reduced model of the non-isothermal Allen-Cahn equation}\label{sect: Limit}
In this section, we show that the sharp interface limit of the non-isothermal Allen-Cahn model (model with the kinematics assumption (A2) on the temperature) leads to a two-phase Stefan type problem.
We utilize the method of matched asymptotics, which has been successfully applied to various phase field models \cite{blowey1993curvature,garcke2014diffuse,garcke2016cahn}. The idea behind a matched asymptotic expansion is to construct an approximation in the form of an asymptotic series that is obtained by splitting the domain into two parts; the inner problem and the outer problem, named for their relationship to the transition layer; and treating each part of the domain as a separate perturbation problem. The outer and inner solutions are then combined through a matching process such that an approximate solution for the whole domain is obtained.

We assume that $W(\phi)$ is the standard double-well potential and $g(\phi)= -\frac{1}{3}\phi^3+\phi$. %Additionally, we consider 
In order to obtain a Stefan type problem as the sharp interface limit, it is crucial to use the following scaling of the parameters:
$\chi(\epsilon)= 1$ and $\eta(\phi,\theta)= \epsilon$.
The final non-isothermal Allen-Cahn model considered is
\begin{align} \label{phi_equation}
    \partial_t \phi &= \Delta \phi -\frac{1}{\varepsilon^2}\phi(\phi^2-1) -\frac{1}{\varepsilon}\frac{\lambda}{\theta_c}(\theta-\theta_c)(\phi^2-1),\\ \label{temp_equation}
    c_v\partial_t \theta&= \nabla\cdot \big(k(\phi)\nabla \theta\big)+\frac{\lambda}{\theta_c}(\phi^2-1)\theta\partial_t \phi + \epsilon|\partial_t\phi|^2,
\end{align}
in a bounded domain $\Omega\subset \mathbb{R}^d$ supplemented with the non-flux boundary conditions
\begin{align}\label{boundary_condition}
    &\nabla \phi\cdot \nu=0,\quad \nabla \theta\cdot \nu=0
\end{align}
and the initial conditions
\begin{align}\label{initial_condition}
  &\phi(0)=\phi^0,\quad \theta(0)=\theta^0.  
\end{align}
% The goal is 

We assume that there is a family of solutions $(\phi_\varepsilon,\theta_\varepsilon)_{\varepsilon>0}$ which are sufficiently smooth and have an asymptotic expansion in $\varepsilon$ in the bulk regions, i.e. the regions of pure phase, away from the interface, and another expansion in the interfacial region. In addition, there are certain matching conditions that relate the two asymptotic expansions.
Moreover, we assume that the zero level sets of $\phi_\varepsilon$ converge to a limiting hypersurface $\Gamma_0$ moving with normal velocity $\mathcal{V}$.

\begin{remark}
    For simplicity we assume that the interface region does not touch the outer boundary $\partial \Omega$.
\end{remark}

\subsection{Outer expansion}
We assume that for $f_\varepsilon \in \{\phi_\varepsilon,\theta_\varepsilon \}$ the following outer expansion holds
\begin{align*}
    f_\varepsilon= f_0+\varepsilon f_1 +\varepsilon^2 f_2+\mathcal{O}(\varepsilon^3).
\end{align*}
Substituting this expansion into the phase-field equation \eqref{phi_equation}, the leading-order term at $\mathcal{O}(\varepsilon^{-2})$ yields
\begin{align}\label{eq: outer expansion phi}
    W'(\phi_0)=\phi_0(\phi_0^2-1)=0.
\end{align}
The solutions to \eqref{eq: outer expansion phi} are $\phi=0$ and $\phi = \pm 1$. Among these, $\phi = \pm 1$ correspond to the minima of the potential $W$, while $\phi = 0$ is a saddle point.
Thus, in the sharp-interface limit, the domain separates into two bulk regions corresponding to the pure phases:
\begin{align}
    \Omega_+:=\{x\in \Omega:\, \phi_0(x)=+1\},\quad \Omega_-:=\{x\in \Omega:\, \phi_0(x)=-1\}
\end{align}
and the interface separating them is defined by
\begin{align}
    \Gamma_0:=\{x\in \Omega:\, \phi_0(x)=0\}.
\end{align}
At the next order, $\mathcal{O}(\varepsilon^{-1})$, equation \eqref{phi_equation} gives the interfacial condition:
% that on the interface $\Gamma_0$ we have that $\theta_0=\theta_c$. 
\begin{align}\label{eq: interface theta}
    \theta_0(x)=\theta_c ~~\text{for } x \in \Gamma_0\ ,
\end{align}
which indicates that the temperature must match the critical value $\theta_c$ on the interface.

Now, consider the temperature equation \eqref{temp_equation}. In the pure phase regions $\Omega_-$ and $\Omega_+$, we have $\partial_t \phi_0=0$. Moreover, by the smoothness of $k(\phi)$, we define the constant thermal conductivities in each phase as 
$$  k_{+1} = k(1), \quad k_{-1}  = k(-1) $$
%$k_1=k(\phi=1)$ in $\Omega_-$ and $k_{-1}=k(\phi= -1)$ in $\Omega_+$, respectively.
Then, at leading order $\mathcal{O}(1)$ in \eqref{temp_equation}, we obtain the classical heat equation in each region:
\begin{align}
   c_v \partial_t \theta_0 = k_{\pm 1} \Delta \theta_0, \quad \textnormal{in } \Omega_\pm.
\end{align}

\subsection{Inner expansion and matching conditions}
We now perform an inner expansion in the interfacial region, where the transition between the two phases takes place.

We denote $\Gamma_0(t)$ the limiting hypersurface defined by the zero level sets of $\phi_{\varepsilon}(x, t)$, i.e., $\Gamma_0(t) = \{ x \in \Omega ~|~ \varphi_{\epsilon}(x, t) = 0  \}$.
To analyze the behavior near the interface, we introduce a new coordinate system. Specifically, let $d(x, t)$ be the signed distance function to $\Gamma_0(t)$ defined as $d(x, t) > 0$ in $\Omega_+$ and $d(x, t)<0$ in $\Omega_-$.  We then define the new interface variable by $z = \frac{d(x, t)}{\varepsilon}$.
Additionally, let $g(t,s)$ be a smooth parametrization of the interface $\Gamma_0(t)$ by the arc-length $s$, and let $\nu(t, s) = \nu (g(t, s))$ denote the unit normal of $\Gamma_0$ at the point $g(t, s)$ pointing into the domain $\Omega_-$.
Then, in a tubular neighborhood of $\Gamma_0(t)$, any sufficiently smooth function $f(x, t)$ can be expressed in the local $(t, s, z)$-coordinate system as
\begin{align*}
    f(x, t)= f(g(t,s)+\varepsilon z\nu(t,s), t)=:F(t,s,z).
\end{align*}
In this new $(t,s,z)$-coordinate system, the following rules of differentiation hold:
\begin{align*}
    \partial_t f(x, t)  &= - \frac{1}{\varepsilon} \mathcal{V} \partial_z F + \mathcal{O}(1),\\
    \nabla_x f (x, t) &= \frac{1}{\varepsilon}\partial_z F \cdot \nu +\nabla_{\Gamma_0} F + \mathcal{O}(\epsilon),\\
    \Delta_x f (x, t)  &= \frac{1}{\varepsilon^2} \partial_{zz} F - \frac{1}{\varepsilon}\kappa \partial_z F+ \mathcal{O}(1),
\end{align*}
where $\mathcal{V}$ is the normal velocity of the interface $\Gamma_0(t)$, $\nabla_{\Gamma_0}$ denotes the gradient on $\Gamma_0$ and $\kappa$ is the mean curvature of $\Gamma_0$.
% Here, h.o.t. denotes the higher order terms with respect to $\varepsilon$.

% Next, we denote the functions $\phi_\varepsilon,\theta_\varepsilon$ in the new coordinate system by $\Phi_\varepsilon,\Theta_\varepsilon$, respectively.
Next, we express the functions $\phi_\varepsilon$ and $\theta_\varepsilon$ in the local $(t, s, z)$-coordinate system by defining
\[
\Phi_\varepsilon(t, s, z) := \phi_\varepsilon(x, t), \quad \Theta_\varepsilon(t, s, z) := \theta_\varepsilon(x, t),
\]
where $x = g(t, s) + \varepsilon z\, \nu(t, s)$.
Then, we further assume that for $F_\varepsilon\in \{\Phi_\varepsilon,\Theta_\varepsilon\}$,  the following inner expansion holds:
\begin{align*}
    F_\varepsilon(t, s,z) = F_0(t, s, z)+\varepsilon F_1(t, s, z)+\mathcal{O}(\varepsilon^2).
\end{align*}
Since $\Gamma_0(t)$ is the limiting zero level set of $\phi_\varepsilon$, it follows that the leading-order of the phase field function satisfies
\begin{align}\label{eq: interface phi condition}
    \Phi_0(t,s,z=0)=0.
\end{align}
Moreover, we impose the far-field conditions:
\begin{align}
    \Phi_\varepsilon(t,s,z=\infty)=1,\quad \Phi_\varepsilon(t,s,z=-\infty)=-1.
\end{align}
In addition, since the interface $\Gamma_0$ corresponds to the level set of the critical temperature $\theta_c$, we have 
\begin{align}
    \Theta_0(t,s,z=0)=\theta_c.
\end{align}

In order to match the inner and outer expansions, we impose the following matching conditions:
\begin{align}\label{eq: matching 1}
    & \lim_{z\to \pm\infty} F_0(t,s,z)= f_0^\pm(t,x),\\ \label{eq: matching 2}
    & \lim_{z\to \pm \infty} \partial_z F_0(t,s,z) =0,\\ \label{eq: matching 3}
    & \lim_{z\to \pm\infty}\partial_z F_1(t,s,z) = \nabla f_0^\pm(t,x)\cdot\nu,
\end{align}
where
\begin{align*}
    f_0^\pm(t,x):=\lim_{\delta\to 0} f_0(t,x\pm \delta\nu)
\end{align*}
for $x\in \Gamma_0$.
Furthermore, let $\delta>0$ and $x\in \Gamma_0$ with $x-\delta\nu \in \Omega_+$ and $x+\delta\nu\in \Omega_-$. Then, we denote the jump of a quantity $f$ across the interface by
\begin{align}
    [f]_+^-:=\lim_{\delta \searrow 0}f(t,x+\delta\nu)-\lim_{\delta \searrow 0}f(t,x-\delta\nu).
\end{align}
% Now, we have all the necessary tools at hand to study the inner expansion.

We now derive the leading-order equation from the phase-field model. At leading order, the terms of order $\mathcal{O}(\varepsilon^{-2})$ in the equation \eqref{phi_equation} yield the following equation
\begin{align}\label{eq: leading order phi inner}
    \partial_{zz}\Phi_0-W'(\Phi_0)=0.
\end{align}
Using \eqref{eq: interface phi condition}, we note that $\Phi_0$ can be chosen to be independent of $s$ and $t$, i.e. $\Phi_0$ is only a function of $z$.
Thus, $\Phi_0$ solves the ODE
\begin{align}
    \Phi_0''(z)-W'(\Phi_0(z))=0, \quad \Phi_0(0)=0,\quad \Phi_0(\pm\infty)=\pm 1.
\end{align}
Then, for the double-well potential we have the unique solution
\begin{align}
    \Phi_0(z)= \tanh\bigg(\frac{z}{\sqrt{2}}\bigg).
\end{align}
% Additionally, we can multiply \eqref{eq: leading order phi inner} by $\Phi_0'(z)$ and obtain after integrating and applying the matching conditions the so-called equipartition of energy

To derive the energy identity, we multiply both sides of \eqref{eq: leading order phi inner}  by $\Phi_0'(z)$:
\[
\Phi_0''(z) \Phi_0'(z) = W'(\Phi_0(z)) \Phi_0'(z).
\]
Recognizing both sides as total derivatives, we integrate with respect to $z$:
\[
\frac{1}{2} \frac{d}{dz} |\Phi_0'(z)|^2 = \frac{d}{dz} W(\Phi_0(z)) \quad \Rightarrow \quad \frac{1}{2} |\Phi_0'(z)|^2 = W(\Phi_0(z)) + C,
\]
where $C$ is an integration constant. To determine $C$, we use the matching conditions: as $z \to \pm \infty$, we have $\Phi_0(z) \to \pm 1$, hence $W(\Phi_0(z)) \to 0$, and $\Phi_0'(z) \to 0$. Substituting into the identity gives
\[
\frac{1}{2} |\Phi_0'(\pm\infty)|^2 = W(\Phi_0(\pm\infty)) + C \quad \Rightarrow \quad 0 = 0 + C,
\]
so $C = 0$. Therefore, we arrive at the \textbf{pointwise equipartition of energy}:
\begin{equation}\label{eq: equipartition of energy}
\frac{1}{2} |\Phi_0'(z)|^2 = W(\Phi_0(z)) \quad \text{for all } z \in \mathbb{R}.
\end{equation}
Moreover, we have
\[
\int_{-\infty}^{\infty} W(\Phi_0(z)) \, \mathrm{d}z = \int_{-\infty}^{\infty} \frac{1}{2}|\Phi_0'(z)|^2 \, \mathrm{d}z = \frac{\sqrt{2}}{3}.
\]

Next, we consider the leading order $\mathcal{O}(\varepsilon^{-2})$ in \eqref{temp_equation} and we obtain that
\begin{align*}
    \partial_z \big(k(\Phi_0(z))\partial_z \Theta_0\big) &=0.
\end{align*}
Integrating the equation once we obtain that $k(\Phi_0(z))\partial_z \Theta_0=\text{const.}$
Using the matching condition \eqref{eq: matching 2} implies then that
\begin{align}
    k(\Phi_0(z))\partial_z \Theta_0=0
\end{align}
and from the interface condition $\Theta_0(st,z=0)=\theta_c$ it follows that 
\begin{align}\label{eq: inner temperature condition}
    \Theta_0(t,s,z)=\theta_c.
\end{align}

Next, we take a look at the expansion to order $\mathcal{O}(\epsilon^{-1})$.
For equation \eqref{phi_equation} we obtain
\begin{align}
    -\mathcal{V}\Phi_0'= -\kappa \Phi_0' -W''(\Phi_0)\Phi_1-\frac{\lambda}{\theta_c}(\Theta_0-\theta_c)(\Phi_0^2-1) +\partial_{zz}\Phi_1.
\end{align}
We multiply this by $\Phi_0'(z)$ and integrate with respect to $z$ from $-\infty$ to $\infty$.
Using the equipartition of energy \eqref{eq: equipartition of energy} and \eqref{eq: inner temperature condition}, this then yields
\begin{align*}
    \int_{-\infty}^\infty \big(\kappa-\mathcal{V}\big)|\Phi_0'|^2\,\textnormal{d}z&=  \int_{-\infty}^\infty \partial_{zz}\Phi_1 \Phi'_0- (W'(\Phi_0))'\Phi_1\,\textnormal{d}z\\
    &= \big[W'(\Phi_0)\Phi_1-\partial_z\Phi_1\Phi_0'\big]_{-\infty}^{\infty} + \int_{-\infty}^\infty \partial_z\big( W'(\Phi_0)-\Phi_0''\big)\,\textnormal{d}z =0,
\end{align*}
where the last equality follows from applying the matching conditions and the leading order expansion.
This then yields
\begin{align}
    \mathcal{V}=\kappa,
\end{align}
which means that the motion of the interface is driven by its mean curvature.

Lastly, for equation \eqref{temp_equation} we have in the order $\mathcal{O}(\epsilon^{-1})$ that
\begin{align*}
   - c_v\mathcal{V} \partial_z \Theta_0&= \partial_z\big(k(\Phi_0)\partial_z\Theta_1\big)-\kappa k(\phi)\partial_z \Theta_0-\frac{\lambda}{\theta_c} (\Phi_0^2-1)\Theta_0 \mathcal{V}\Phi_0' +|\mathcal{V}\Phi_0'|^2.
\end{align*}
We can reduce this by applying \eqref{eq: inner temperature condition} and obtain
\begin{align*}
    \partial_z\big(k(\Phi_0)\partial_z\Theta_1\big))=\lambda (\Phi_0^2-1) \mathcal{V}\Phi_0' -|\mathcal{V}\Phi_0'|^2
\end{align*}
Integrating with respect to $z$ from $-\infty$ to $\infty$ and applying the matching condition \eqref{eq: matching 3} together with \eqref{eq: equipartition of energy} yields
\begin{align}
    \big[k(\phi)\nabla \theta_0\cdot \nu\big]_-^+= \mathcal{V}(\lambda \sqrt{2}-\mathcal{V})\frac{2\sqrt{2}}{3}.
\end{align}

\subsection{Sharp interface limit}

By performing a matched asymptotic expansion of the original system, we formally derive the following sharp-interface limit:
\begin{subequations}\label{NLS_1}
\begin{align} 
        c_v\partial_t \theta &=  k_{+1} \Delta \theta~ \textnormal{ in } \Omega_+,\label{eq: limit syst 1}\\
        c_v\partial_t \theta &=  k_{-1} \Delta \theta~\textnormal{ in } \Omega_-,\label{eq: limit syst 2}\\
       \big[\nabla \theta \cdot \nu\big]_-^+&= \mathcal{V}(\lambda \sqrt{2}-\mathcal{V})\frac{2\sqrt{2}}{3}~ \textnormal{ on } \Gamma_0\label{eq: limit syst 3}, 
\end{align}
\end{subequations}
together with the Dirichlet boundary condition for the temperature at the interface:
\begin{equation}\label{NLS_2}
     \theta =\theta_c~ \textnormal{ on } \Gamma_0.
\end{equation}
Here, $\mathcal{V}$ is the the normal velocity at the interface $\Gamma_0$. The formal asymptotic analysis suggests that the interface evolves according to motion by mean curvature:
\begin{align*}
    \mathcal{V} = \kappa\ ,
\end{align*}
where $\kappa$ is the mean curvature of $\Gamma_0$. However, a rigorous mathematical analysis is still required to establish the existence of such a special solution to the Stefan-type problem \eqref{NLS_1} with the Dirichlet boundary condition \eqref{NLS_2}.

\begin{remark}
   We note that the limit system is indeed a two-phase Stefan type problem.
   Hence, our result is in accordance with the observations by Caginalp.
   Thus, we can compare the limit problem obtained from the non-isothermal Allen-Cahn system with the classical two-phase Stefan problem, which reads as follows
    \begin{align}
        \partial_t \theta &= \frac{k_s}{c_s\rho_s}\Delta \theta, \quad \textnormal{in } \Omega_s,\\
        \partial_t \theta &= \frac{k_l}{c_l\rho_l}\Delta \theta, \quad \textnormal{in } \Omega_l,
    \end{align}
    where $k_s, k_l$ are the heat conductivity in the solid and liquid phase, respectively, $c_s, c_l$ denote the specific heat capacities, and $\rho_s, \rho_l$ the corresponding densities.
    To close the system the Stefan condition, a conservation law on the free boundary which balances the heat transported into the free boundary and the melting heat generated through solidification, is introduced.
    It reads as
    \begin{align}
       \mathcal{V}(L_l-L_s)&= -\left[\frac{k_{s/l}}{\rho_{s/l}}\partial_\nu \theta \right]_-^+,\quad \textnormal{on} \Gamma_0
    \end{align}
    where $L$ is the latent heat per unit volume in each of the phases.
\end{remark}

\begin{remark}
    We want to point out that the quadratic nonlinearity in the jump condition \eqref{eq: limit syst 3} is due to the entropy production in the system and therefore a direct consequence of the consistency of the system with the laws of thermodynamic.
\end{remark}

\begin{remark}
   We want to emphasize that the scaling of the melting/freezing energy is critical such that the free interfaces and temperature motion via motion by mean curvature coincide. If the scaling is on the same order as the double-well potential, i.e. we have $\chi(\epsilon)=\epsilon^{-1}$, there will be the discrepancy between the level set of temperature and interface. In this case, the dynamics of temperature is driven by the dissipation, instead of mean curvature, as illustrate in the simulation results in Fig. \ref{Fig5} below.
  %  the diffusion of the temperature drives the interface motion.[NOT exactly] 
\end{remark}

\section{Numerics}\label{sect: Numerics}

In this section, we perform a numerical study of the non-isothermal Allen-Cahn equation (\ref{phi_equation}) - (\ref{temp_equation}) with $\lambda = 1$, which can be rewritten as
\begin{equation}\label{Form_1}
  \begin{cases}
    & \phi_t = - \frac{1}{\epsilon} \mu, \quad \mu = \frac{1}{\epsilon} (\phi^2 - 1) \phi - \epsilon \Delta \phi + \frac{1}{\theta_c} (\theta - \theta_c) (\phi^2 - 1). \\
    & c_v \theta_t =  \nabla \cdot ( \kappa (\phi) \nabla \theta) -  \frac{1} {\epsilon}  \frac{\theta}{\theta_c} (\phi^2 - 1) \mu + \frac{1}{\epsilon} |\mu|^2 \\
  \end{cases}
 \end{equation}
The numerical simulations focus on the effects of the curvature in the dynamics. Our aim is to validate the results of the asymptotic expansion discussed in the previous section and provide the behavior of solution for finite $\epsilon$.

It is noticed that if we rescale the temperature by a constant $\theta_0$, the temperature equation becomes 
\begin{equation}
c_v \theta_0 \tilde{\theta}_t =  \theta_0 \nabla \cdot ( \kappa  \nabla \tilde{\theta}) -  \frac{1} {\tilde{\theta}_c \epsilon} \tilde{\theta} (\phi^2 - 1) \mu + \frac{1}{\epsilon} |\mu|^2 \ , 
\end{equation}
where $\tilde{\theta} = \theta / \theta_0$ and $\tilde{\theta}_c = \theta_c / \theta_0$. 
In the current study, without loss of generality, we take $c_v  = \kappa = 1$, and set $\theta_c = 273.15$. 

To solve the equation (\ref{Form_1}), we adopt the following semi-implicit scheme for temporal discretization:
\begin{itemize}[leftmargin=*, topsep=0pt]
\item Fixed $\theta^{n}$, solve equation of $\phi$ using the implicit Euler scheme
\begin{equation}
   \frac{\phi^{n+1} - \phi^n}{\Delta t} = - \frac{1}{\epsilon} \left( \frac{1}{\epsilon} ( (\phi^{n+1})^2 - 1) \phi^{n+1} - \epsilon \Delta \phi^{n+1} + \frac{1}{\theta_c} (\theta^n - \theta_c) ( (\phi^{n+1})^2 - 1) \right)  
\end{equation}

\item With $\phi^{n+1}$, solve the heat equation using
\begin{equation}
  c_v \frac{\theta^{n+1} - \theta^n}{\Delta t} =  \nabla \cdot ( \kappa \nabla \theta^{n+1})  - \frac{1}{\theta_c \epsilon} \mu^{n+1} g'(\phi^{n+1}) \theta^{n+1} + \frac{1}{\epsilon} |\mu^{n+1}|^2\ ,
\end{equation}
where $$\mu^{n+1} = \frac{1}{\epsilon} ( (\phi^{n+1})^2 - 1) \phi^{n+1} - \epsilon \Delta \phi^{n+1} + \frac{1}{\theta_c} (\theta^n - \theta_c) ( (\phi^{n+1})^2 - 1).$$
\end{itemize}
We use the standard central finite difference scheme for the spatial discretization. 
The current numerical scheme is quite simple and may not preserve the original energetic variational structure in the discrete level.
We plan on developing a structure-preserving numerical scheme to (\ref{Form_1}) in the future work. In all numerical simulations below, we take $\epsilon = 0.05$, $h = 1/50$ and $\Delta t = 10^{-3}$ unless stated otherwise.
 
\begin{figure}[!htb]
%[!htbp]
\centering
\includegraphics[width = 0.7 \linewidth]{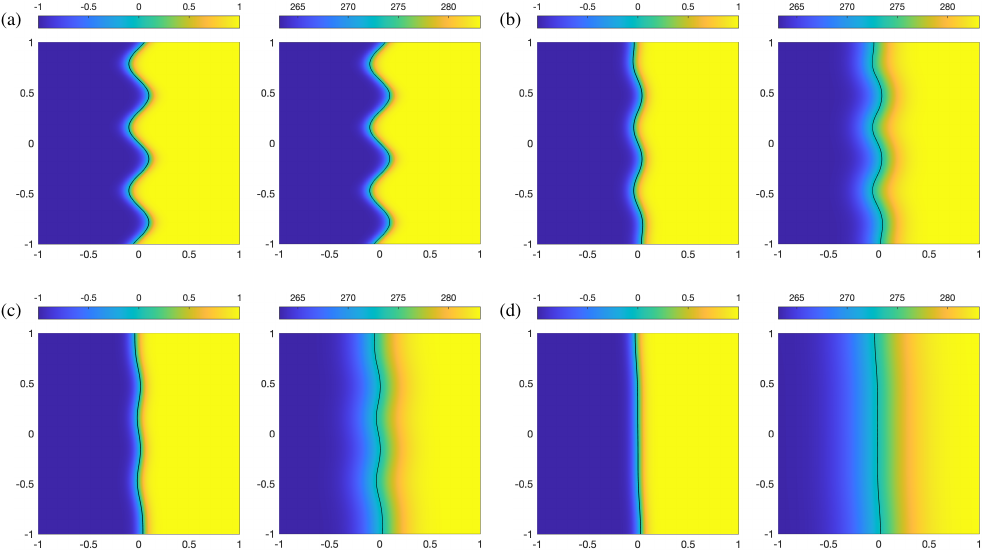}
\caption{Numerical results for $\epsilon = 0.05$ at different time: (a) $t = 0$, (b) $t = 0.01$, (c) $t = 0.02$, and (d) $t = 0.05$.  }\label{Fig_1}
\end{figure}

\subsection{Quasi-1D case}

We first consider a quasi-1D case, by taking the initial condition as $$\phi_0(x, y) = \tanh\left( \frac{x + 0.1 \sin (10 y) }{\sqrt{2} \epsilon} \right), \quad \theta_0 = \theta_c + 10 \phi_0(x, y)$$
We impose the Neumann boundary condition for both $\phi$ and $\theta$. Fig. \ref{Fig_1} shows the numerical results for different times $t$. One can notice that the movement of the interface $\theta = \theta_c$ is almost same to the movement of the interface $\phi = 0$, both are driven by the curvature effect.

The initial condition is motivated by the numerical example in \cite{chen1997simple}, which can be used to explore the Mullins-Sekerka instability in some Stefan problems.
It is worth mentioning that the Allen-Cahn type phase field model mainly captures the curvature effects in the interfacial dynamics. To investigate the Mullins-Sekerka instability and dendritic growth, one should study a non-isothermal Cahn-Hilliard type phase-field model \cite{de2024temperature}.

\subsection{Motion by mean curvature}
%In this subsection, we study the dynamics of interfaces in non-isothermal Allen-Cahn equation. 

Next, we consider the initial conditions  
\begin{equation}\label{mc_in_1}
\phi_0 (\x) = \tanh \left( \frac{\sqrt{x^2 + y^2 - 0.6}}{\sqrt{2}\,\epsilon} \right), \qquad  \theta_0(\x) = 10\,\phi_0(\x) + 273.15,
\end{equation}
where Dirichlet boundary conditions are imposed on both $\phi$ and $\theta$.  

\begin{figure}[!h]
\centering

\begin{subfigure}[b]{0.49\linewidth}
  \includegraphics[width=\linewidth]{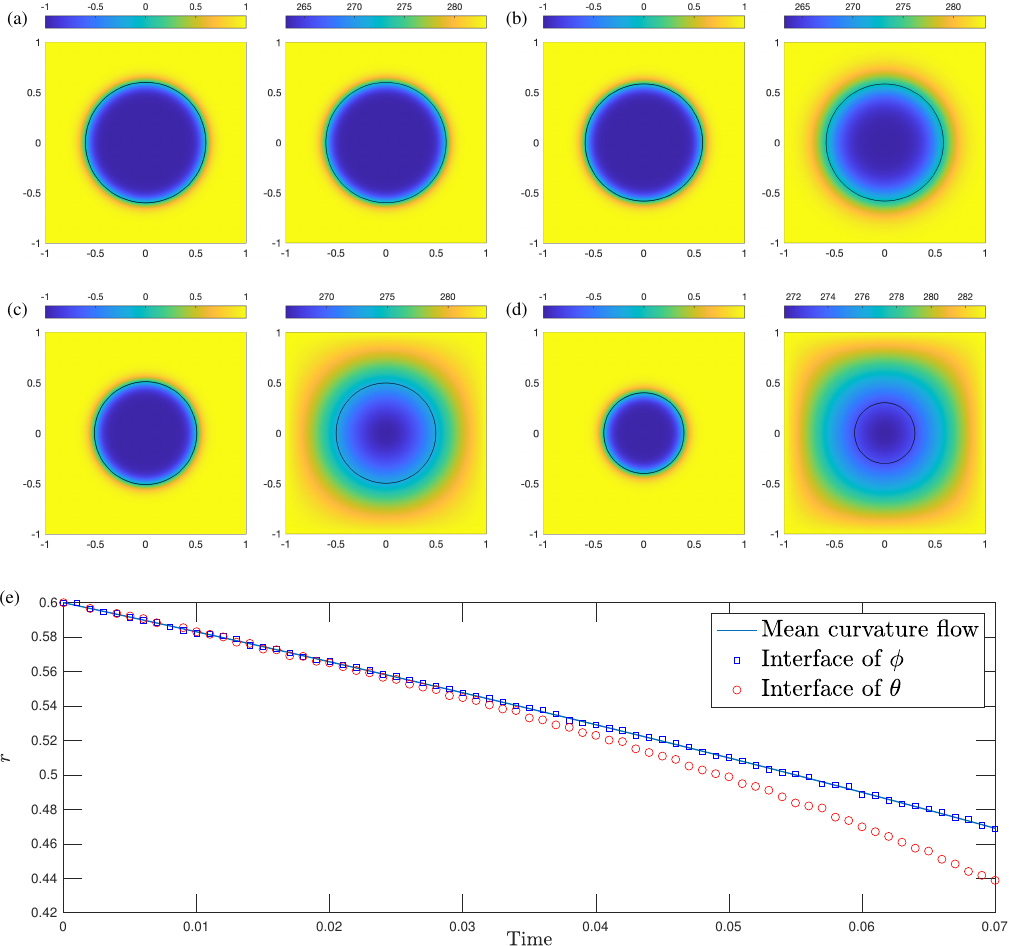}
  \caption{\small Initial condition (\ref{mc_in_1})}
  \label{fig:2a}
\end{subfigure}
\hfill
\begin{subfigure}[b]{0.49\linewidth}
  \includegraphics[width=\linewidth]{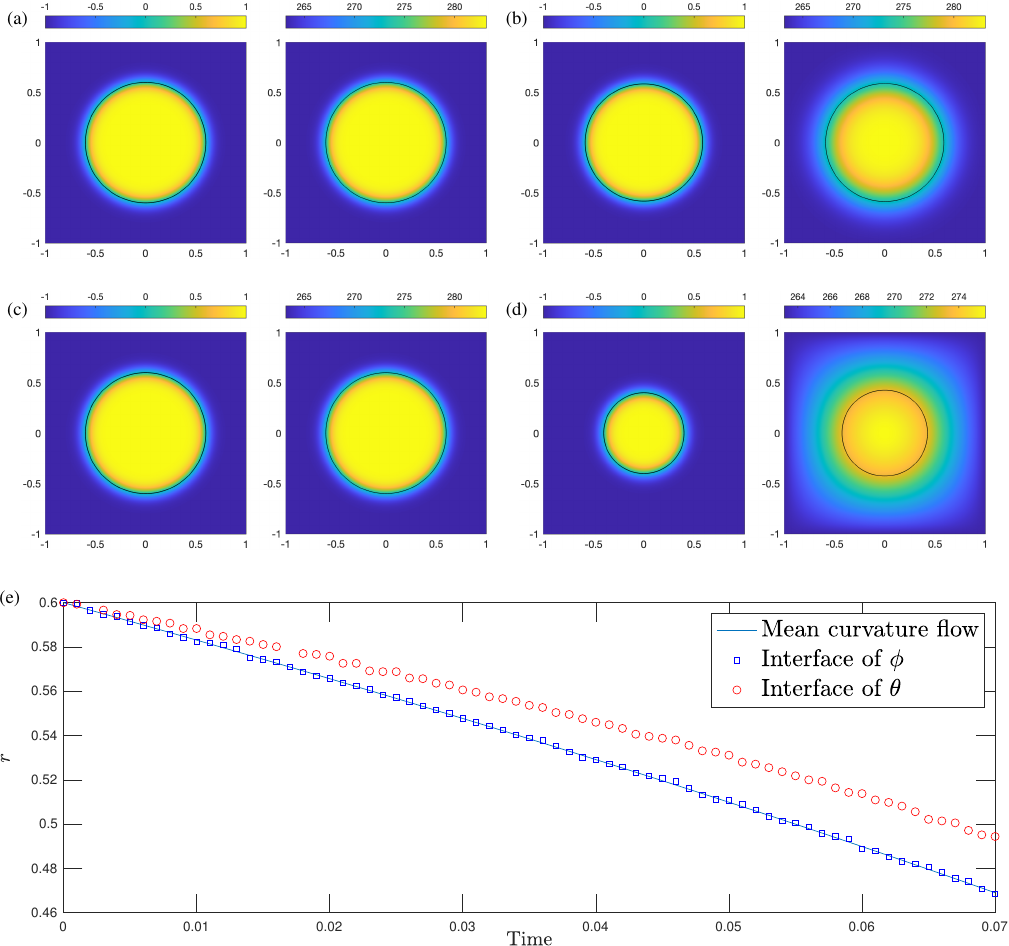}
  \caption{\small Initial condition (\ref{mc_in_2})}
  \label{fig:2b}
\end{subfigure}

\vspace{-1.5 em}
\caption{Numerical results for $\epsilon = 0.05$ under two different initial conditions. Panel~(A) corresponds to initial condition~\eqref{mc_in_1}, and panel~(B) to~\eqref{mc_in_2}. Within each panel, subplots~(a)–(d) show the evolution of $\phi$ and $\theta$ at $t=0,\,0.01,\,0.05,\,0.1$, while subplot~(e) displays the interface positions $\phi = 0$ and $\theta = \theta_c$ as functions of time}
\end{figure}

Figure~\ref{fig:2a}(a)--(d) displays the numerical results for $\epsilon = 0.05$ at different times, while Figure~\ref{fig:2a}(e) shows the evolution of the interfaces defined by $\phi = 0$ and $\theta = \theta_c$.  The asymptotic analysis presented earlier shows that, in the sharp-interface limit, the motion of both interfaces converges to mean curvature flow, described by  
\begin{align*}
R(t) = \sqrt{0.36 - 2t},
\end{align*}
as $\epsilon \to 0$, where $R(t)$ denotes the radius of the interface. Numerical results confirm that, for small times $t$, the interfaces $\phi = 0$ and $\theta = \theta_c$ both evolve according to mean curvature flow, in agreement with the asymptotic prediction. For larger times, however, the two interfaces gradually diverge, exhibiting a \emph{superheating} effect in which the temperature within certain regions of the solid phase exceeds the phase-transition threshold. We also consider the initial condition  
\begin{equation}\label{mc_in_2}
\phi_0 (\x) = -\tanh \left( \frac{\sqrt{x^2 + y^2 - 0.6}}{\sqrt{2}\,\epsilon} \right), 
\qquad 
\theta_0(\x) = 10\,\phi_0(\x) + 273.15,
\end{equation}
with results shown in Fig.~\ref{fig:2b}. Similarly, the $\phi$-interface follows mean curvature flow, while the $\theta = \theta_c$ interface deviates from it. Superheating is again observed, as the temperature in certain regions of the solid phase remains above the critical temperature.

To further demonstrate the numerical convergence of the phase-field model to its sharp-interface limit, we consider four different $\epsilon$: $\epsilon = 0.1, 0.05, 0.02$ and $0.01$. Figures~\ref{Fig_diff_ep}(a)-(b) show the evolution of interfaces for $\phi = 0$ and $\theta = \theta_c$ for different $\epsilon$, respectively. As expected, the $\phi = 0$ interface approaches the mean curvature flow as $\epsilon \to 0$, which is consistent with the asymptotic analysis.
The evolution of the interface $\theta = \theta_c$ is in agreement with mean curvature flow in the short time, but deviate from $\phi = 0$ even for small $\epsilon$ when $t$ goes large, leading to the superheating phenomenon. The gap between the two interfaces does not visibly shrink as $\epsilon$ decreases.

\begin{figure}[!h]
\centering
\begin{overpic}[width = 0.49 \linewidth]{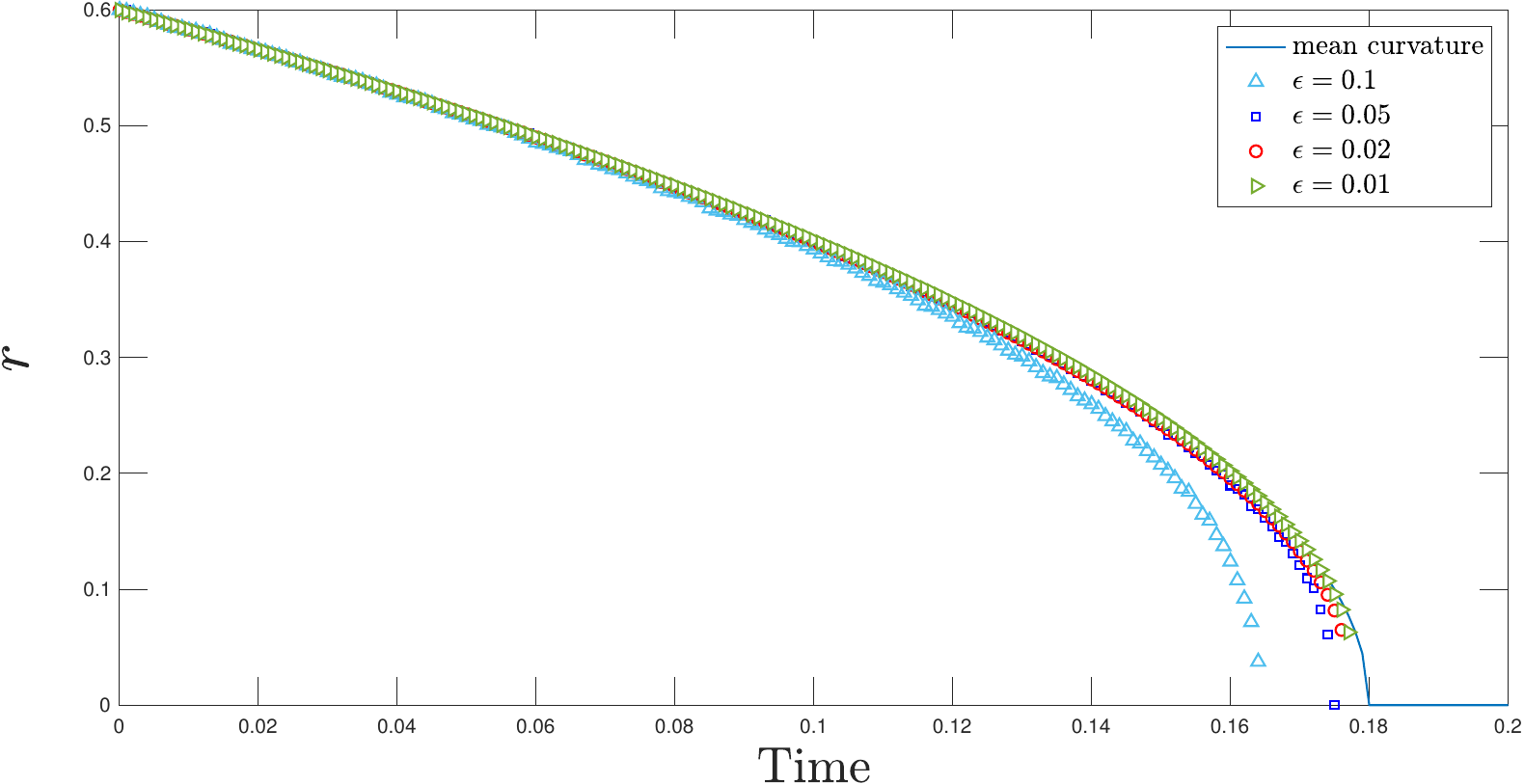}
\put(-3, 45){{(a)}}
\end{overpic}
\hfill
\begin{overpic}
[width = 0.49 \linewidth]{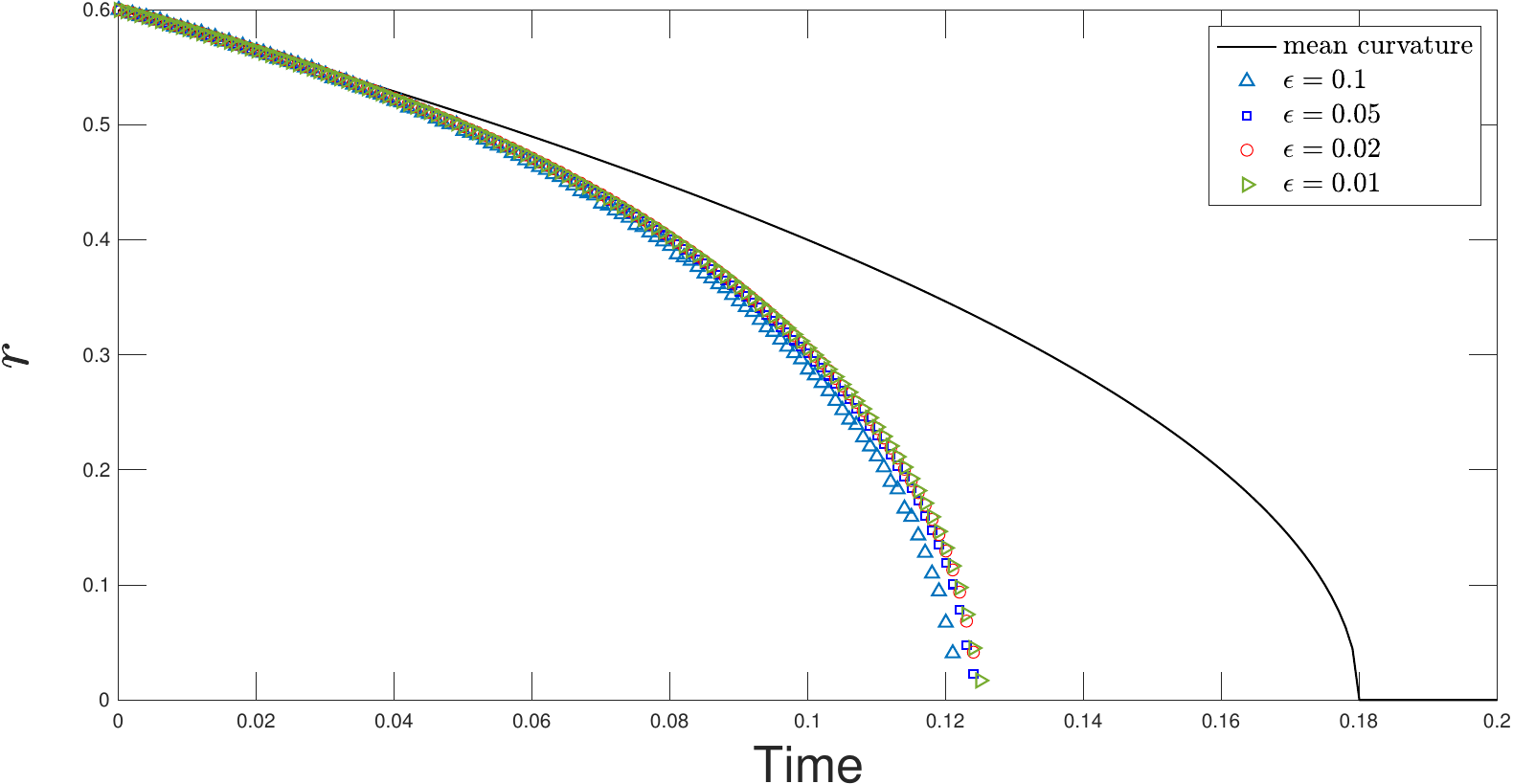}
\put(-3, 45){{\small (b)}}
\end{overpic}
\caption{Evolution of the interfaces: (a) $\phi = 0$ and (b) $\theta = \theta_c$, for varying $\epsilon$.}\label{Fig_diff_ep}
\end{figure}

The numerical result can be understood as follows: the phase-field equation at $\mathcal{O}(1/\epsilon)$ enforces the Dirichlet condition $\theta = \theta_c$ on the moving interface. In principle, as $\epsilon \to 0$, the two interfaces are expected to coincide. Nevertheless, the evolution of the temperature field is governed by a heat equation at $\mathcal{O}(1)$, which adjusts only diffusively in time and is not strictly slaved to the motion of the $\phi$-interface. Consequently, the $\theta = \theta_c$ level set may propagate faster than the $\phi=0$ interface, leading to the superheating effect. This scale separation between the phase-field dynamics and the heat equation likely explains why the numerical simulations do not exhibit a clear collapse of the gap between the two interfaces, even for relatively small $\epsilon$. In practice, resolving this discrepancy would require significantly smaller values of $\epsilon$, which are beyond the range currently tractable by the numerical methods employed.

Next, we consider more complicated initial shapes for the droplets. Fig. \ref{fig:ellipse} shows the numerical results for the initial condition 
\begin{align}\label{eq: ellipsoide initial cond}
    \phi_0 (\x) = \tanh \left( \frac{\sqrt{x^2 + 4y^2 - 0.6} }{\sqrt{2} \epsilon} \right), \quad \theta_0(\x) = 10 \phi_0(\x) + 273.15\ ,
\end{align}
along with Dirichlet boundary conditions. Again, the movement of both interfaces are driven by the mean curve, and almost same initially. But the interface $\theta = \theta_c$ moves faster and disappears early.
The evolution at different snap shots in time is shown in Fig \ref{fig:ellipse} (a)-(f).
We also consider an initial condition with a triangular shape of the droplet, shown in Fig. \ref{fig:tri}. The simulation results are shown in Fig. \ref{fig:tri}(a) - (f) for different times $t$.  Similar to the previous case, the movement of the two interfaces is in agreement when $t$ is small, but will diverge from each other for larger times.

\begin{figure}[!h]
\centering
    \begin{subfigure}[b]{0.48\linewidth}
        \centering
        \includegraphics[width=\linewidth]{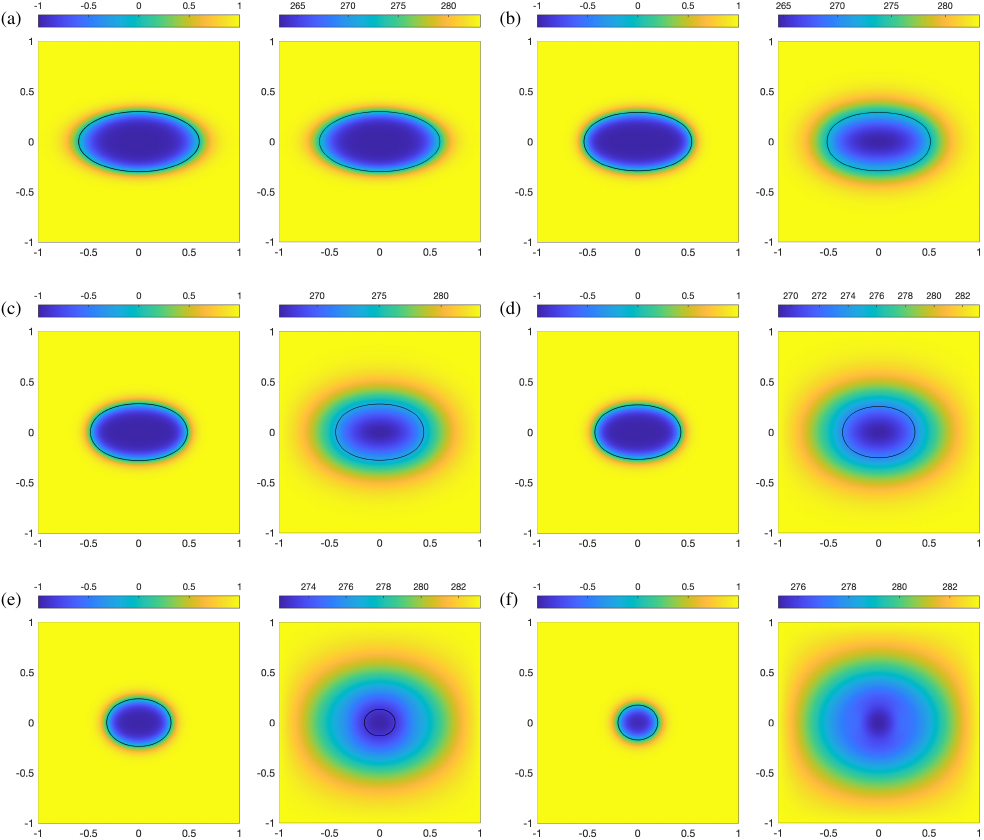}
        \caption{Evolution of initial condition \eqref{eq: ellipsoide initial cond}}
        \label{fig:ellipse}
    \end{subfigure}
    \hfill
    \begin{subfigure}[b]{0.48\linewidth}
        \centering
        \includegraphics[width=\linewidth]{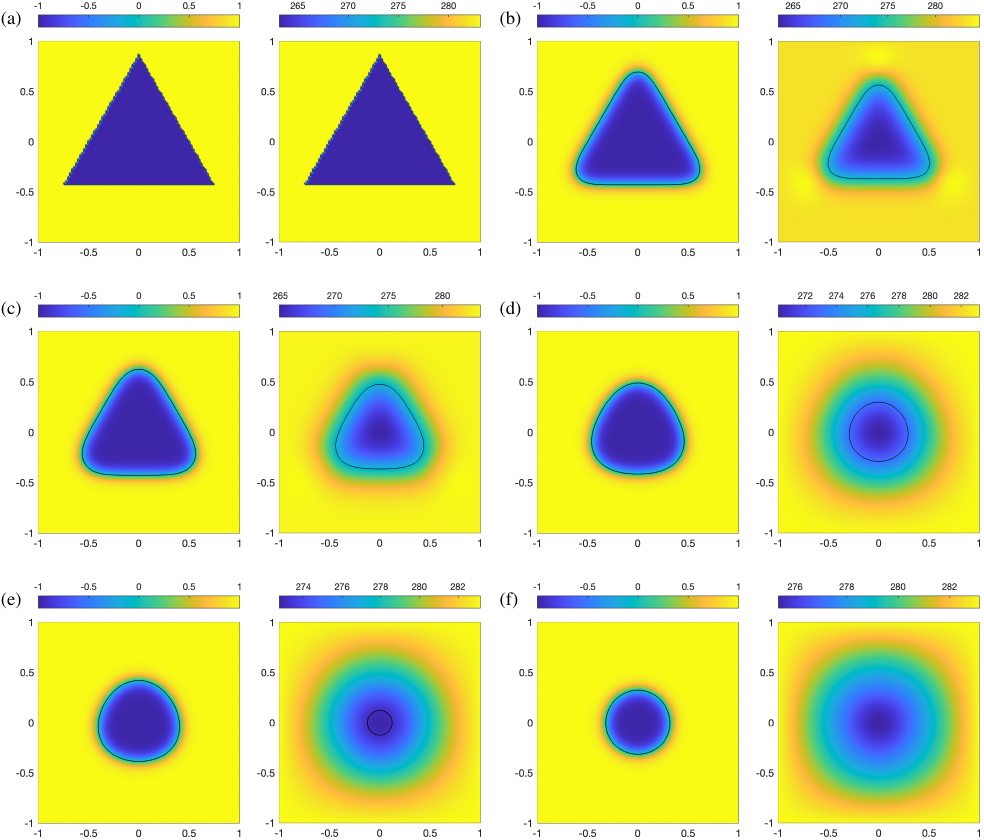}
        \caption{Evolution of triangular initial condition}
        \label{fig:tri}
    \end{subfigure}
\caption{Numerical results for $\epsilon = 0.05$ under two different initial conditions. Panel~(A) corresponds to initial condition~\eqref{mc_in_1}, and panel~(B) to~\eqref{mc_in_2}. Within each panel, subplots~(a)–(d) show the evolution of $\phi$ and $\theta$ at $t=0,\,0.01,\,0.05,\,0.1$, while subplot~(e) displays the interface positions $\phi = 0$ and $\theta = \theta_c$ as functions of time}
\label{Fig3}
\end{figure}

\subsection{Beyond the Stefan problem}

As mentioned earlier, in order to reduce the non-isothermal Allen-Cahn equation to the classical Stefan problem in the sharp interface limit, it is crucial to choose the proper scaling of $\chi(\epsilon)$ in the asymptotic analysis. Fig. \ref{Fig5} shows a simulation result for $\chi(\epsilon) = \frac{1}{\epsilon}$. The initial and boundary conditions are same to that in Fig. \ref{fig:2a}. Under this scaling, the movement of interfaces of the temperature field and phase function are no longer consistent with each other even in short time. Although the curvature effects still dominate the motion of the interface of $\phi$, the movement of the interface is slower than the mean curvature flow. In contrast to the Stefan problem, the movement of interface of the temperature no longer follows the mean curvature flow. 
\begin{figure}[h]
\centering
\includegraphics[width = 0.6 \linewidth]{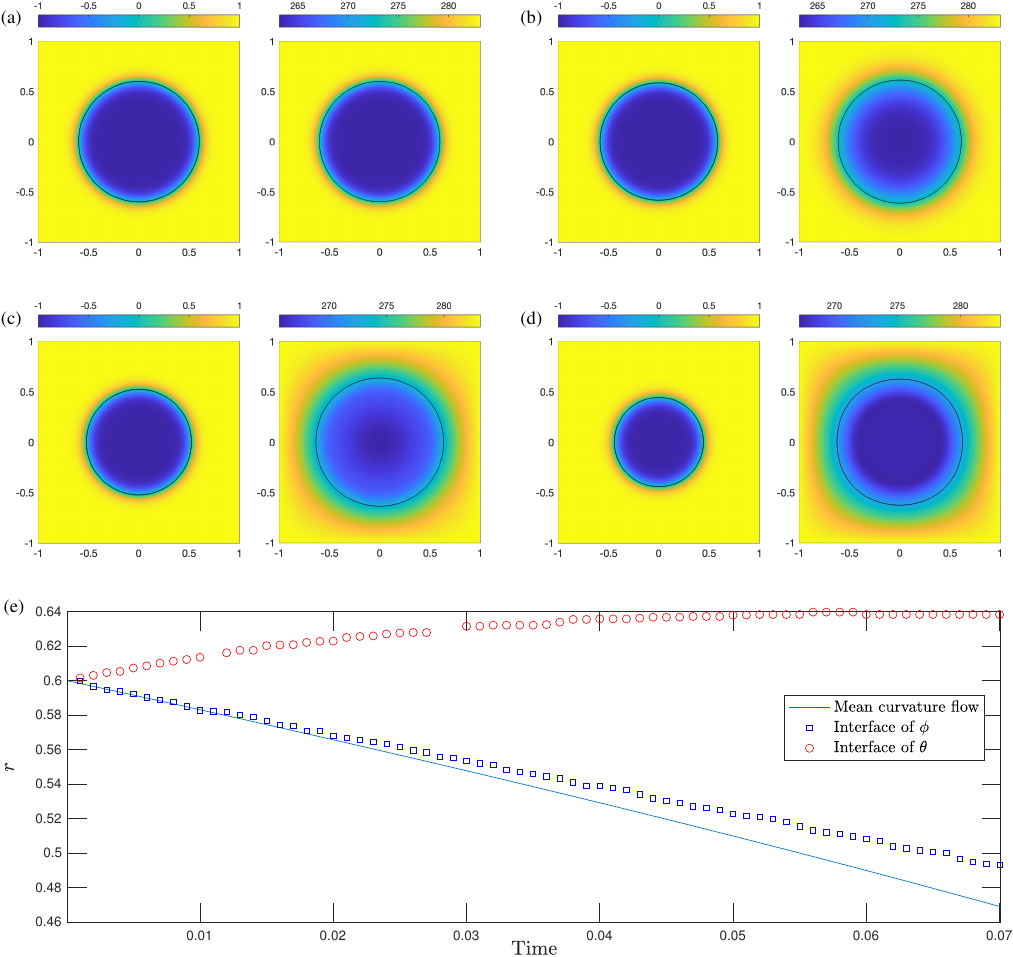}
\caption{Numerical results for $\epsilon = 0.05$ at different time (left: $\phi$  ; right: $\theta$ ): (a) $t = 0$, (b) $t = 0.01$, (c) $t = 0.05$, and (d) $t = 0.1$.  (e) Location of interface $\phi = 0$ and $\theta = \theta_c$ as a function of time. }\label{Fig5}
\end{figure}

\section{Conclusion}
In this paper, we derive a non-isothermal Allen-Cahn model for phase transition and heat transfer by using an energetic variational approach. 
In contrast to the Stefan problem, which employs a single interface to describe phase transition and temperature, the non-isothermal Allen-Cahn model contains more physical information. 
By asymptotic analysis, we have shown that the sharp interface limit of the non-isothermal Allen-Cahn equation, under certain scaling of the melting/freezing energy, results in a nonlinear Stefan type problem.
Various numerical simulations further demonstrate that the Stefan problem is a good approximation to the non-isothermal Allen-Cahn equation in the short time, as evidenced by the close agreement in the evolution of two interfaces. However, over longer time periods, the movement of two interfaces will diverge from each other.

While diffuse interface models can be used to study different effects of interface dynamics during phase transitions, the Allen-Cahn type model examined in this paper mainly focuses on curvature effects. In contrast, other physical phenomena, other physical phenomena, such as the Mullins-Sekerka instability and dendritic growth, can only be captured by the Cahn-Hilliard type diffuse interface model \cite{de2024temperature}. We will perform numerical studies and analyze the asymptotic limit of these diffuse interface models in future work.

\section*{Acknowledgements}
C. L. was partially supported by NSF DMS-2153029, DMS-2118181 and DMS-2410742.
J.-E. S. was supported by the German Science Foundation (DFG) under grant No.456754695.
Y. W. was supported by NSF DMS-2153029 and DMS-2410740.

\bibliographystyle{siam}
\bibliography{lit}

\end{document}